\documentclass[12pt]{amsart}
\setcounter{tocdepth}{1}
\usepackage{amsmath}
\usepackage{amssymb}
\usepackage{epsfig}
\usepackage{graphicx}
\usepackage{xcolor}
\numberwithin{equation}{section}

\usepackage[all,2cell]{xy}  


\calclayout
\allowdisplaybreaks[3]

\DeclareFontFamily{U}{mathb}{\hyphenchar\font45}
\DeclareFontShape{U}{mathb}{m}{n}{
      <5> <6> <7> <8> <9> <10> gen * mathb
      <10.95> mathb10 <12> <14.4> <17.28> <20.74> <24.88> mathb12
      }{}
\DeclareSymbolFont{mathb}{U}{mathb}{m}{n}
\DeclareMathSymbol{\righttoleftarrow}{3}{mathb}{"FD}

\newcommand{\eqto}{\stackrel{\lower1.5pt\hbox{$\scriptstyle\sim\,$}}\to}
\newcommand{\eqdashto}{\stackrel{\lower1.5pt\hbox{$\scriptstyle\sim\,$}}\dashrightarrow}

\theoremstyle{plain}
\newtheorem{prop}{Proposition}

\newtheorem{theo}[prop]{Theorem}
\newtheorem{coro}[prop]{Corollary}

\newtheorem{lemm}[prop]{Lemma}

\theoremstyle{definition}
\newtheorem{defi}[prop]{Definition}

\newtheorem{rema}[prop]{Remark}

\newtheorem{exam}[prop]{Example}

\makeatother
\makeatletter

\newcommand{\bA}{\mathbb A}
\newcommand{\bC}{\mathbb C}
\newcommand{\bF}{\mathbb F}

\newcommand{\bP}{\mathbb P}

\newcommand{\bR}{\mathbb R}
\newcommand{\bZ}{\mathbb Z}

\newcommand{\cA}{\mathcal A}

\newcommand{\cE}{\mathcal E}

\newcommand{\cJ}{\mathcal J}
\newcommand{\cK}{\mathcal K}

\newcommand{\cO}{\mathcal O}

\newcommand{\cR}{\mathcal R}
\newcommand{\cU}{\mathcal U}

\newcommand{\cX}{\mathcal X}
\newcommand{\cY}{\mathcal Y}

\newcommand{\rD}{\mathrm D}

\newcommand{\rH}{\mathrm H}
\newcommand{\rK}{\mathrm K}
\newcommand{\rL}{\mathrm L}

\newcommand{\fS}{\mathfrak S}

\newcommand{\fp}{\mathfrak p}
\newcommand{\fq}{\mathfrak q}

\newcommand{\Bl}{\operatorname{Bl}}

\newcommand{\Def}{\operatorname{Def
}}

\newcommand{\Gr}{\operatorname{Gr}}
\newcommand{\Pic}{\operatorname{Pic}}

\newcommand{\Aut}{\operatorname{Aut}}

\newcommand{\ra}{\rightarrow}

\author[B\"ohning]{Christian B\"ohning}
\address{Mathematics Institute\\
Zeeman Building\\
University of Warwick\\
Coventry CV4 7AL\\
UK}
\email{C.Boehning@warwick.ac.uk}

\author[von Bothmer]{Hans-Christian Graf von Bothmer}
\address{Fachbereich Mathematik\\
Bundesstraße 55\\
20146 Hamburg\\
Germany}
\email{hans.christian.v.bothmer@math.uni-hamburg.de}

\author[Tschinkel]{Yuri Tschinkel, with an appendix by Brendan Hassett}
\address{Courant Institute\\
                New York University \\
                New York, NY 10012 \\
                USA }
\email{tschinkel@cims.nyu.edu}

\address{Simons Foundation\\
160 Fifth Avenue\\
New York, NY 10010\\
USA}

\title[Equivariant birational geometry and derived categories]{Equivariant birational geometry of cubic fourfolds and derived categories}

\begin{document}
\date{\today}

\begin{abstract}
We study equivariant birationality from the perspective of derived categories. We produce examples of nonlinearizable but stably linearizable actions of finite groups on smooth cubic fourfolds. 
\end{abstract}

\maketitle

\section{Introduction}
\label{sect:intro}

Let $X$ be a smooth projective algebraic variety over a field $k$ and $\rD^b(X)$ its bounded derived category of coherent sheaves. It is a rich algebraic object:   
a celebrated theorem of Bondal and Orlov \cite{BO} states that $\rD^b(X)$ determines $X$ uniquely, if its canonical or anticanonical class is ample. 
This uniqueness can fail, there exist nonisomorphic but 
{\em derived equivalent} varieties, 
e.g., K3 surfaces or abelian varieties. 
These results and constructions inspired active investigations of derived categories, and derived equivalences in various contexts. 

In some sense,  $\rD^b(X)$ contains {\em too much} information, or rather, the data that are relevant in concrete geometric applications are hard to visualize. The overarching goal is to extract computable, more compact, invariants of derived categories that would allow to answer basic questions about geometry, such as
\begin{itemize}
\item existence of $k$-rational points, or
\item $k$-rationality. 
\end{itemize}
This has been pursued in, e.g., \cite{HT-derived}, 
\cite{auel-bernardara}, \cite{AB-surv}, \cite{AAFH}.


One natural candidate for an invariant of $\rD^b(X)$ 
is the {\em Kuznetsov component} $\cA_X$, an admissible 
subcategory of $\rD^b(X)$,
which however depends on the choice of a
maximal semiorthogonal decomposition (see Section~\ref{sect:derived} for definitions). 
The expectation is that this component captures, in particular, rationality properties of $X$; this idea has been tremendously influential. It has been tested in many situations, e.g., Fano threefolds, or special cubic fourfolds. In these cases, the Kuznetsov component is identified as the orthogonal to a naturally defined {\em exceptional sequence} of objects in $\rD^b(X)$. 

Due to its universality, one might expect that this approach is valid over nonclosed fields,  as well as in presence of group actions. 
Here, we explore this in detail in the equivariant context, for smooth cubic fourfolds, equipped with a regular, generically free action of a finite group $G$.  
Our main result is:

\begin{theo}
\label{thm:main}
There exist smooth Pfaffian cubic fourfolds $X$ 
with a regular generically free action of a finite group $G$
such that 
\begin{itemize}
\item the $G$-action is not linearizable, i.e., not equivariantly birational to a (projective) linear $G$-action on $\bP^4$, 
\item the $G$-action on $X\times \bP^1$, with trivial action on the second factor, is linearizable,
\item 
the standard Kuznetsov component $\cA_X$ is $G$-equivalent to $\rD^b(S)$, 
with the $G$-action induced by an embedding of $G$ into
the automorphisms of a K3 surface $S$,
\item the variety of lines $F_1(X)$ is $G$-birational to $S^{[2]}$, the  Hilbert scheme of two points on $S$. 
\end{itemize}
\end{theo}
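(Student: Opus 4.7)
The plan is to realize all examples simultaneously through a $G$-equivariant refinement of the Pfaffian cubic construction. First I would fix a finite group $G$, a six-dimensional representation $V$ of $G$, and a $G$-invariant six-dimensional subspace $W \subset \wedge^2 V^*$, chosen so that the associated Pfaffian locus $X \subset \bP(W) \cong \bP^5$ is a smooth cubic fourfold and the associated K3 surface $S = \Gr(2,V) \cap \bP(W^\perp)$ of degree $14$ is smooth. By construction, both $X$ and $S$ are $G$-invariant and the $G$-action on $S$ factors through $\Aut(S)$. The art lies in choosing $G$ and $V$ so that, simultaneously, the stabilizer stratification on $X$ produces a nontrivial class in an equivariant birational invariant (needed in the last step) and the resulting $S$ has enough Picard and automorphism structure to accommodate $G \hookrightarrow \Aut(S)$. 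Representations of small finite groups with Pfaffian-invariant six-dimensional subspaces of $\wedge^2 V^*$ should be amenable to computer-algebra search.

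Second, the third and fourth bullets would follow by making classical constructions equivariant. Kuznetsov's equivalence $\cA_X \simeq \rD^b(S)$ for Pfaffian cubics is realized by an explicit Fourier--Mukai kernel arising from homological projective duality between $\bP(\wedge^2 V)$ and its Pfaffian dual; because every ingredient is canonical, the kernel is naturally $G$-linearized and descends to a $G$-equivariant equivalence. The Beauville--Donagi identification of $F_1(X)$ with $S^{[2]}$ in the Pfaffian setting is similarly implemented by a correspondence (via the Hilbert scheme of quartic scrolls on $X$) that is manifestly equivariant under the action constructed in the first step, giving the desired $G$-birationality.

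Third, for stable linearizability, I would exploit the fact that a Pfaffian cubic fourfold is rational via a blow-up/blow-down sequence along a quartic scroll $T \subset X$. A $G$-equivariant version of this sequence, after multiplication with the auxiliary $\bP^1$ used to absorb obstructions coming from the non-uniqueness of the scroll in its $G$-orbit, should yield an equivariant birational map $X \times \bP^1 \dashrightarrow \bP^5$ to a linear $G$-representation. The $\bP^1$-factor plays the role of a ``buffer'' that straightens out equivariant choices which cannot be made canonically on $X$ alone.

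Finally, for non-linearizability, I would apply an equivariant birational invariant --- the equivariant Burnside group $\Burn_4(G)$, or an equivariant intermediate Jacobian / cohomological obstruction adapted from \cite{HT-derived} --- to the $G$-action on $X$, and show that the resulting symbol cannot arise from any linear $G$-action on $\bP^4$. The main obstacle is the delicate balancing act forced by the first two bullets: the invariant used here must distinguish $X$ from every linear $\bP^4$, yet die after multiplication by $\bP^1$ so as not to contradict the stable linearization of Step~3. This constrains one to invariants sensitive to the unstable birational type, such as ``incompressible'' symbols in the equivariant Burnside group, and dictates the choice of $G$ and of the stabilizer stratification on $X$ --- in particular the curves and surfaces fixed by nontrivial cyclic subgroups --- so that the relevant class is nonzero but stably trivial. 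Identifying such a $G$ and verifying both sides of this dichotomy is the core difficulty of the proof.
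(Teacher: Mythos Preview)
Your overall framework---an equivariant Pfaffian construction yielding compatible $G$-actions on $X$ and $S$---is exactly the one the paper uses, and your treatment of the third and fourth bullets (equivariant HPD for $\cA_X\simeq\rD^b(S)$, and the Beauville--Donagi map for $F_1(X)\sim_G S^{[2]}$) matches the paper's essentially verbatim. The divergence is in the other two bullets, where you make the problem much harder than it needs to be.

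For non-linearizability, the paper does not compute any Burnside symbol. It simply takes $G=\mathrm{AGL}_1(\bF_7)=C_7\rtimes C_6$, whose unique faithful irreducible representation is the six-dimensional $V$; since $G$ has no faithful linear representation of dimension $\le 5$ and trivial Schur multiplier (all Sylows cyclic), there is no projectively linear $G$-action on $\bP^4$ at all, and the question is settled before any birational invariant enters. This also dissolves your ``balancing act'': there is nothing to check about stable triviality of an obstruction class. For stable linearizability, your scroll-plus-$\bP^1$-buffer sketch is where the proposal has a real gap: you do not explain how the $\bP^1$ factor ``absorbs'' the non-equivariance of the scroll choice, and in fact the paper does not use the scroll construction. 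Instead it observes that the rank-$2$ kernel bundle $\mathcal K_X\subset V^*\otimes\cO_X$ is $G$-linearized, that $\bP_X(\mathcal K_X)\dashrightarrow\bP(V^*)$ is $G$-equivariantly birational (a generic $[v^*]$ lies in a unique kernel), and that the No-Name Lemma gives $\bP_X(\mathcal K_X)\sim_G X\times\bP^1$ with trivial action on the second factor; chaining these yields $X\times\bP^1\sim_G\bP(V^*)$. This is a two-line argument once the Pfaffian setup is in place, and it is where your proposal would benefit most from revision.
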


A more precise version is given in Theorem~\ref{thm:GK3NotLin}.
Our main theorem contradicts natural equivariant analogs of existing rationality conjectures, as explained in Section~\ref{sect:geom}.

The stable linearizability proof is based on an adaptation to the equivariant context of the classical Pfaffian construction. This allows us to establish new stable linearizability results for, e.g., quadric surfaces, see Section~\ref{sect:examples}.

\

\

\noindent
{\bf Acknowledgments:} 
The first author was supported by the EPSRC New Horizons Grant EP/V047299/1.
The third author was partially supported by NSF grant 
2000099. 

We are grateful to Chunyi Li for helpful discussions, to Brendan Hassett for proving a result on $G$-Hodge structures, in the Appendix, and to Alexander Kuznetsov for several comments.

\section{Derived categories}
\label{sect:derived}

We recall basic notions concerning derived categories that are used in applications to birational geometry, see, e.g., \cite{Orlov-survey}. 

\subsection*{Notation}
Let $G$ be a finite group. 
A $G$-variety over $k$ is an algebraic variety with a regular action of $G$.  
From now on, by a {\em category} we mean a $k$-linear triangulated category. A $G$-category is a category $\cA$ together with a homomorphism 
$$
G\to \mathrm{Auteq}(\cA),
$$
the group of autoequivalences of $\cA$. A strictly full $k$-linear triangulated subcategory $\mathcal B\subset \cA$ of a $G$-category $\cA$ is $G$-stable if for every object $E\in \mathcal B$ and every $g\in G$ we have $g_*E \in \mathcal B$.  

\subsection*{Semiorthogonal decompositions}
Let $X$ be a smooth projective variety over a field $k$ and $\rD^b (X)$ its derived category of coherent sheaves. 
An object $E\in\rD^b (X)$ is called {\em exceptional} if 
\[
\mathrm{Hom}(E, E) \simeq k, \quad \mathrm{Ext}^r (E, E) = 0, \; r\neq 0.
\]
An exceptional sequence is an ordered tuple of exceptional objects 
\[
(E_1, \dots , E_n)
\]
such that 
\[
\mathrm{Ext}^l (E_r, E_s)=0, \quad \forall\, r>s, l .
\]
An exceptional sequence is called {\em full} if  the smallest full triangulated subcategory containing the $E_r$ is equivalent to $\rD^b (X)$, i.e., if the sequence generates $\rD^b (X)$. 

The notion of {\em semiorthogonal decomposition} generalizes the preceding concepts. 
A full subcategory $\cA$ of $\rD^b (X)$ is called {\em admissible} if the inclusion functor has a left and right adjoint. 
A sequence 
$$
(\cA_1, \dots , \cA_n)
$$
of admissible subcategories of $\rD^b(X)$
is called a {\em semiorthogonal decomposition} of $\rD^b (X)$ if
\begin{itemize}
\item the $\cA_1,\ldots, \cA_n$ generate $\rD^b (X)$ and 
\item there are no derived Hom's from any object in $\cA_r$ to an object in $\cA_s$, for $r >s$. 
\end{itemize}
A semiorthogonal decomposition is called {\em maximal} if the $\cA_s$ do not admit a further nontrivial semiorthogonal decomposition. 
Explicit semiorthogonal decompositions have been computed in many examples, not only over fields but also over more general bases, see, e.g., \cite{kuz-semi}.

\begin{exam}
\label{exam:fano}
Let $X\subset \bP^n$ be a smooth Fano variety with Picard group of rank one, generated by the hyperplane class, and of index $r$. Then there is a semiorthogonal decomposition
$$
\rD^b(X)=\langle \cA_X, \cO_X, \cO_X(1),\ldots, \cO_X(r-1)\rangle;
$$
here $\cA_X$ is called the {\em Kuznetzov component} of $\rD^b(X)$.
For smooth cubic fourfolds $X$, one has $r=3$, and the subcategory $\cA_X$ has {\em some} formal properties of the derived category of a K3 surface. 
\end{exam}

\subsection*{Essential dimension and blowups}
A $k$-linear triangulated category $\mathcal{T}$ is said to be of {\em essential dimension} at most $m$, if $\mathcal{T}$ embeds as a full admissible subcategory into a derived category $\rD^b (Z)$ of a smooth projective variety $Z$ of dimension at most $m$. 
Usually, this definition is applied to a piece in a semiorthogonal decomposition of $\rD^b(X)$.

We now recall Orlov's blowup formula \cite[Theorem 4.3]{orlov-mon}: Let 
$$
q: \tilde{X} = \Bl_Z(X)\to X
$$ 
be a blowup of $X$ in a smooth subvariety $Z$. Then there is a diagram
\centerline{
\xymatrix{
\bP(\mathcal N_Z) \ar@{^{(}->}^j[r] \ar[d]_p & \tilde{X}\ar[d]^q\\
Z \ar@{^{(}->}^i[r] & X 
}
}
and we have a collection of subcategories 
$$
\cA_s:=j_*\left(p^*(\rD^b(Z) \otimes \cO_{\bP(\mathcal N_Z)}(s))\right),
$$
(where all the functors are in the derived sense). Note that $\cA_s$ are all equivalent to $\rD^b(Z)$. 
If  $r:=\mathrm{codim}(Z)$, then 
$$
\langle \cA_{-r+1},\ldots, \cA_{-1}, q^*(\rD^b(X))\rangle
$$
is a semiorthogonal decomposition of $\rD^b(X)$. 

Since $\bP^n$ has a full exceptional collection, and every smooth projective rational variety $X$ can be linked to $\bP^n$ by a sequence of blowups and blowdowns along smooth centers, it has become a guiding principle that such $X$ {\em should} have semiorthogonal decompositions with pieces of essential dimension at most $n-2$. There are various issues that arise, e.g., maximal decompositions are by no means unique, see \cite{BB}, \cite{kuz-simple}, \cite[Remark 5.6]{Orlov-smooth}.  
Still, this point of view has been the basis of conjectures concerning rationality of higher-dimensional varieties 
over closed and nonclosed fields, 
e.g., 
cubic fourfolds \cite[Conj 4.2]{kuz-der-view}, Gushel-Mukai varieties \cite{Perry-Kuz}, and Brauer-Severi varieties, del Pezzo surfaces, Fano threefolds \cite{Bernardara}, \cite{AB-surv}, \cite{auel-bernardara}, \cite{Kuz-Pro}.

\subsection*{$G$-categories}
We turn to $G$-varieties $X$, where $G\subset \Aut(X)$ is a finite group. There is an induced embedding 
$$
G\hookrightarrow \mathrm{Auteq}(\rD^b(X)),
$$
so that $\rD^b(X)$ is a $G$-category. 
The reconstruction theorem of \cite{BO} admits a natural generalization to the equivariant context:

\begin{prop}
Suppose $X$ and $Y$ are smooth projective $G$-varieties, $X$ is Fano, and 
\[
\Phi\colon \rD^b (X) \simeq \rD^b (Y)
\]
is an equivalence of $G$-categories. Then there exists a $G$-equivariant isomorphism
\[
\varphi\colon X \to Y 
\]
inducing $\Phi$. 
\end{prop}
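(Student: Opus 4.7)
The plan is to leverage the classical Bondal--Orlov reconstruction theorem of \cite{BO} and then observe that every ingredient of that proof is intrinsic to the triangulated category, hence automatically respects the $G$-action on the categorical side.

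As the first step, ignore the $G$-structure momentarily. Since $X$ is Fano, $-K_X$ is ample, so by \cite{BO} any exact equivalence $\Phi\colon \rD^b(X)\simeq \rD^b(Y)$ is of the form
$$
\Phi\simeq (-\otimes L)\circ \varphi_*\circ [n]
$$
for a uniquely determined isomorphism $\varphi\colon X\to Y$, a line bundle $L$ on $Y$, and an integer $n$. The isomorphism $\varphi$ is recovered by first identifying the \emph{point objects} in $\rD^b(X)$ (characterized categorically as objects $P$ with $S_X(P)\simeq P[\dim X]$, $\mathrm{Hom}(P,P)\simeq k$, and $\mathrm{Ext}^{<0}(P,P)=0$; these are exactly shifts of skyscraper sheaves $\cO_x$) and similarly in $\rD^b(Y)$; second, identifying \emph{invertible objects}, which turn out to be shifts of line bundles; and finally, reconstructing the scheme structure from derived homomorphism spaces between these two classes.

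For the second step, I would promote this construction to the equivariant setting. A $G$-equivalence consists of $\Phi$ together with isomorphisms $\eta_g\colon \Phi\circ g_*^X \stackrel{\sim}{\to} g_*^Y\circ \Phi$ satisfying a cocycle condition. The defining conditions for point objects, as well as for invertible objects, are preserved by \emph{any} autoequivalence, so $g_*^X$ permutes point objects of $\rD^b(X)$, and this permutation on isomorphism classes is precisely the $G$-action on $X$ (since $g_*\cO_x\simeq \cO_{g(x)}$). Using $\eta_g$, the bijection $\varphi\colon X\to Y$ on closed points induced by $\Phi$ intertwines the two $G$-actions, i.e., $\varphi\circ g^X=g^Y\circ\varphi$ set-theoretically. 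The same argument, applied to invertible objects and to the reconstruction of the algebra structure on $\bigoplus_m \mathrm{Hom}(L,L\otimes \omega_X^{-m})$, shows that the resulting scheme-theoretic isomorphism $\varphi$ is $G$-equivariant, not merely a bijection on points. By the uniqueness clause in Bondal--Orlov, this $\varphi$ is the one attached to $\Phi$.

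The main obstacle I anticipate is bookkeeping the $2$-categorical data: strictly speaking the $\eta_g$ are only natural isomorphisms, and one needs to verify that the induced maps on the sets of isomorphism classes of point objects and on the reconstructed graded ring are canonical enough that no choice-dependent ambiguity survives. In practice this is automatic because any autoequivalence of $\rD^b(X)$ preserving the subset of point objects and the subset of invertible objects induces, on closed points and on the anticanonical graded ring, a well-defined map independent of the choice of representatives; consequently the cocycle condition on $\eta_g$ transports cleanly to the equality $\varphi\circ g^X=g^Y\circ \varphi$ in $\mathrm{Hom}(X,Y)$.
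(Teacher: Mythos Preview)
Your proposal is correct and follows essentially the same route as the paper: invoke the Bondal--Orlov reconstruction, observe that point objects are intrinsic to the triangulated category and that the reconstruction procedure sends the support of a point object to the support of its image, and conclude that the resulting isomorphism $\varphi$ is $G$-equivariant because $\Phi$ is. The paper's argument is terser---it does not spell out the role of invertible objects or the cocycle bookkeeping you mention---but the underlying mechanism is identical.
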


\begin{proof}
The fact that $Y$ is also Fano and $X$ and $Y$ are isomorphic as varieties is just the reconstruction theorem of Bondal and Orlov \cite{BO}. More precisely, one can define a \emph{point object} in $\rD^b (X)$ (and similarly $\rD^b (Y)$) as an object $P$ such that
\begin{enumerate}
    \item $S_X (P) \simeq P [n]$, where $S_X$ is the Serre functor and $n\in \mathbb{Z}$.
    \item For $r<0$ one has $\mathrm{Ext}^r (P, P) =0$.
    \item $\mathrm{Hom}(P,P) \simeq k$.
\end{enumerate}
These conditions imply, if $X$ is Fano, that $n=\dim X$ and $P$ is, up to shift, the skyscraper sheaf $k_{\fp}$ of a closed point $\fp\in X$. Furthermore, from the given equivalence $\Phi$, the reconstruction procedure of Bondal and Orlov outputs an  isomorphism $\varphi$ with the property that if $\Phi$ maps a point object $P$ with support $\fp$ in $X$ to a point object $Q$ with support $\fq$ in $Y$, then $\varphi (\fp)=\fq$. Since $\Phi$ is assumed to be an equivalence of $G$-categories, it follows that $\varphi$ is a $G$-morphism as well. 
\end{proof}

\begin{prop}
Let $X$ be a smooth projective $G$-variety.
Assume that there is a semiorthogonal decomposition
$$
\rD^b(X) = \langle \cA,\mathcal B\rangle,
$$
where $\mathcal B$ is a $G$-stable subcategory. Then $\cA$ is $G$-stable. 

\end{prop}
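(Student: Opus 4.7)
The strategy is to characterize $\cA$ intrinsically in terms of $\mathcal{B}$, and then exploit the fact that elements of $G$ act as autoequivalences (hence preserve all $\Hom$-spaces up to relabeling). Concretely, I plan to show that $\cA$ coincides with the right orthogonal
$$
\mathcal{B}^\perp := \{E\in \rD^b(X) : \Hom(B,E[i])=0 \text{ for all } B\in\mathcal{B},\ i\in\bZ\},
$$
and then observe that this description is manifestly preserved by any autoequivalence that sends $\mathcal{B}$ to itself.

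For the identification $\cA=\mathcal{B}^\perp$, the inclusion $\cA\subseteq\mathcal{B}^\perp$ is exactly the semiorthogonality axiom. For the reverse inclusion, given $E\in\mathcal{B}^\perp$, I would use the decomposition triangle $B\to E\to A\to B[1]$ with $B\in\mathcal{B}$ and $A\in\cA$ provided by the semiorthogonal decomposition, apply $\Hom(B',-)$ for arbitrary $B'\in\mathcal{B}$, use that $\Hom(B',A[i])=0$ by semiorthogonality and $\Hom(B',E[i])=0$ by assumption, conclude $\Hom(B',B[i])=0$, and then set $B'=B$ to force $B\simeq 0$, so $E\simeq A\in\cA$.

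Once $\cA=\mathcal{B}^\perp$ is established, the core of the proof takes one line. For $A\in\cA$, $g\in G$, and any $B\in\mathcal{B}$, the autoequivalence $g_*$ yields a natural isomorphism
$$
\Hom(B, g_*A[i]) \;\simeq\; \Hom(g_*^{-1}B,\, A[i]).
$$
Since $\mathcal{B}$ is assumed $G$-stable, $g_*^{-1}B$ lies in $\mathcal{B}$, and since $A\in\cA=\mathcal{B}^\perp$ the right-hand side vanishes. Thus $g_*A\in\mathcal{B}^\perp=\cA$, proving $G$-stability.

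There is no serious obstacle: the only point requiring care is the identification of the first piece of a semiorthogonal decomposition with the right orthogonal of the second, which is a standard feature of admissible subcategories. Everything else follows formally from the definition of a $G$-category as a homomorphism into $\mathrm{Auteq}(\rD^b(X))$ and the fact that autoequivalences preserve $\Hom$-spaces.
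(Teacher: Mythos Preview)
Your proposal is correct and follows essentially the same approach as the paper: identify $\cA$ with the right orthogonal $\mathcal{B}^\perp$, then use that $g_*$ is an autoequivalence together with $G$-stability of $\mathcal{B}$ to conclude $g_*A\in\mathcal{B}^\perp$. The only difference is that you spell out the identification $\cA=\mathcal{B}^\perp$ via the decomposition triangle, whereas the paper simply asserts it as the definition of $\cA$; the core argument is identical.
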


\begin{proof}
Indeed, $\mathcal{A}$ is, by definition, the full subcategory consisting of all objects $X$ that satisfy
\[
\mathrm{Hom}(Y, X[i])=0, \quad \forall\, i\in \bZ ,
\]
for all $Y$ in $\mathcal{B}$. 
Denoting the action of an element $g\in G$ on an object in $\mathrm{D}^b (X)$ by $g_*$ we obtain
\[
\mathrm{Hom}(g_*Y, g_*X[i])=0, \quad \forall\, i\in \bZ,
\]
because $g$ acts by an autoequivalence on $\mathrm{D}^b (X)$, and in particular, $\mathrm{Hom}(g_*Y, g_*X[i])$ is isomorphic to $\mathrm{Hom}(Y, X[i])$ as $k$-vector space. Since $g_* Y$ is another object of $\mathcal{B}$ and all objects in $\mathcal{B}$ are of this form (because $\mathcal{B}$ is $G$-stable), we get that $g_* X$ is an object of $\mathcal{A}$.
\end{proof}

\begin{coro}
In the notation of Example~\ref{exam:fano}, the Kuznetsov component $\cA_X$ of a $G$-Fano variety is naturally a $G$-category. 
\end{coro}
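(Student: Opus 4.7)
The plan is to deduce the corollary immediately from the preceding proposition applied to the semiorthogonal decomposition
$$
\rD^b(X) = \langle \cA_X,\mathcal B\rangle, \qquad \mathcal B := \langle \cO_X, \cO_X(1),\ldots, \cO_X(r-1)\rangle,
$$
of Example~\ref{exam:fano}. Once $\mathcal B$ is shown to be a $G$-stable admissible subcategory of $\rD^b(X)$, the proposition yields that $\cA_X$ is $G$-stable as well. The action of each $g\in G$ on $\rD^b(X)$ then restricts to an autoequivalence of $\cA_X$, and the homomorphism $G\to \mathrm{Auteq}(\rD^b(X))$ descends to a homomorphism $G\to \mathrm{Auteq}(\cA_X)$, giving $\cA_X$ the desired $G$-category structure.

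The only point requiring verification is thus the $G$-stability of $\mathcal B$, i.e., that $g_*\cO_X(k)\simeq \cO_X(k)$ for every $g\in G$ and every $0\le k\le r-1$. Since $g\colon X\to X$ is an automorphism, $g_*\cO_X(k)$ is again a line bundle on $X$. By hypothesis $\Pic(X)=\bZ\cdot [\cO_X(1)]$, so $g_*\cO_X(1)$ is isomorphic to $\cO_X(m)$ for some $m\in\bZ$; since $\cO_X(1)$ is ample and the autoequivalence $g_*$ sends ample line bundles to ample line bundles, we must have $m=1$, giving $g_*\cO_X(1)\simeq \cO_X(1)$. Tensoring then yields $g_*\cO_X(k)\simeq \cO_X(k)$ for all $k$, so each generator of $\mathcal B$ is preserved up to isomorphism and $\mathcal B$ is $G$-stable.

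There is essentially no obstacle here: the argument is a direct application of the preceding proposition, once the rank-one Picard hypothesis in Example~\ref{exam:fano} is used to observe that the hyperplane line bundles generating the exceptional block are individually $G$-invariant, and hence that the block they span is $G$-stable as a subcategory of $\rD^b(X)$.
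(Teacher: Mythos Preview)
Your argument is correct and matches the paper's implicit reasoning exactly: the corollary is stated there without proof, as an immediate application of the preceding proposition once one observes (as you do) that the line bundles $\cO_X(k)$ are individually $G$-invariant. The only micro-gap is that ampleness of $g_*\cO_X(1)$ yields merely $m>0$; to conclude $m=1$ you also use that $g_*$ induces an \emph{automorphism} of $\Pic(X)\simeq\bZ$ and hence acts by $\pm 1$.
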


We recall from \cite{ploog} the notion of a $G$-linearized object of $\rD^b(X)$: 

\begin{defi}\label{def:linearizedobject}
A complex $E^{\bullet}$ in $\rD^b (X)$ is $G$-linearized if it is equipped with a $G$-linearization, i.e., a system of isomorphisms
\[
\lambda_g\colon E^{\bullet} \to g^* E^{\bullet} 
\]
for each $g\in G$, satisfying the compatibility condition 
\[
\lambda_{1} = \mathrm{id}_{E^{\bullet}}, \quad \lambda_{gh}= h^* (\lambda_g)\circ \lambda_h . 
\]
\end{defi}

\subsection*{Nonbirational linear actions}

Let $\fp\in X$ be a closed point and $T_{\fp} X$ the tangent space at $\fp$. For the skyscraper sheaf $k_{\fp}$ we have
\[
\mathrm{Ext}^r (k_{\fp}, k_{\fp}) \simeq \Lambda^r T_{\fp} X, \quad r \in [0, \dim X],
\]
and zero otherwise. In particular, $\mathrm{Ext}^1 (k_{\fp}, k_{\fp})$ parametrizes length $2$ zero-dimensional subschemes supported at $\fp$, which are tangent vectors at $\fp$ to $X$. When $G$ is abelian and $P$ in $\rD^b (X)$ is a point object fixed under $G$, we can consider the weights of the $G$-action on 
\[
\mathrm{Hom} (P, P[1]) = \mathrm{Ext}^1 (P, P),
\]
these are the weights of the $G$-action on $T_{\fp} X$ for the $G$-fixed point ${\fp}\in X$ that is the support of $P$. 

These weights play a role in the computation of the class of the $G$-action in the {\em equivariant Burnside group}, introduced in \cite{KPT} and \cite{BnG}. 
In particular, this formalism allows to distinguish birational types of linear $G$-actions on a variety as simple as $\bP^2$:

\begin{exam}\label{exam:NonBirP2}
Let $G=C_m\times \fS_3$, $m\ge 5$, the product of the cyclic group of order $m$ and the symmetric group on three letters, $V_2$ the standard $2$-dimensional representation of $\fS_3$, and
$k_{\chi}, k_{\chi'}$ 1-dimensional representations of $C_m$ with primitive characters $\chi, \chi'$, $\chi\neq \pm \chi'$.
Then
\[
\bP (k_{\chi}\oplus V_2) , \quad \bP (k_{\chi'}\oplus V_2)
\]
are not $G$-birational to each other. However, both varieties admit
\[
\cO, \cO(1), \cO(2)
\]
as a full (strong) exceptional sequence of $G$-linearized line bundles (albeit with different linearizations on those bundles).
The failure of $G$-birationality  is  proved in \cite[Example 5.3]{KT-struct} and \cite[Section 10]{KT-vector}, using the Burnside formalism of \cite{BnG}. 
\end{exam}

This example indicates that essential information is  contained in the $G$-linearizations of the objects of the collection, respectively, in the attachment functors/nonzero Hom-spaces between the pieces of the decomposition.

\section{Cubic fourfolds: geometry}
\label{sect:geom}

Let $X\subset \bP^5$ be a smooth cubic fourfold, over $k=\bC$. Let $F=F_1(X)$
be the variety of lines of $X$, it is a holomorphic symplectic fourfold {\em deformation equivalent} to $S^{[2]}$, the Hilbert scheme of two points on a K3 surface $S$. 

\subsection*{Rationality}
In this context, there are three main conjectures
concerning the rationality of $X$ (see \cite{Huy} for background, latest results, and references): each of the following conditions is conjectured to be {\em equivalent} to the rationality of $X$.

\begin{enumerate}
\item There is a primitive isometric embedding of Hodge structures  
\begin{equation*}
\label{eqn:**}
\rH^2(S,\bZ)_{\mathrm{pr}}\hookrightarrow  \rH^4(X,\bZ)_{\mathrm{pr}}(1),
\end{equation*}
for some polarized K3 surface $(S, h)$.
\item There is an equivalence of $k$-linear triangulated categories
$$
\rD^b(S) \simeq \cA_X,
$$
for some K3 surface $S$.
\item There is a {\em birationality}  
$$
F_1(X)\sim S^{[2]},
$$
for some K3 surface $S$.
\end{enumerate}
Recall that (1) and (2) are equivalent, by \cite{AT}.
A motivic version has been addressed in \cite{vial}: cubic fourfolds over arbitrary fields with derived equivalent Kuznetsov components have isomorphic Chow motives. 

While there is growing evidence for the validity of these conjectures, some of it based on extensive numerical experiments, the compatibility of these constructions with group actions remained largely unexplored.

\subsection*{Pfaffians}
\label{sect:pfaffian}

We recall some multilinear algebra occurring in the 
construction of Pfaffian cubic fourfolds. Let $V$ be a 
 $k$-vector space of dimension 6,
and consider the nested strata
\[
\mathrm{Gr}(2, V) \subset \mathrm{Pf}(V) \subset \mathbb{P} (\Lambda^2 V),
\]
where $\mathrm{Pf}(V)$ parametrizes skew $6\times 6$ matrices of generic rank $4$ and $\mathrm{Gr}(2, V)$ those of rank $2$. Dually, we also have
\[
\mathrm{Gr}(2, V^*) \subset \mathrm{Pf}(V^*) \subset \mathbb{P} (\Lambda^2 V^*). 
\]
Given a $5$-dimensional subspace $\bP (L )\subset \mathbb{P} (\Lambda^2 V)$, we have an associated $8$-dimensional subspace $\bP (L^{\perp})$ in $\mathbb{P} (\Lambda^2 V^*)$. If 
$$
X = \mathrm{Pf}(V) \cap \bP (L)
$$ 
is smooth then it is a Pfaffian cubic fourfold with associated K3 surface \begin{equation}
\label{eqn:S}
S=\mathrm{Gr}(2, V^*)\cap \bP (L^{\perp}).
\end{equation}
In this context, Conjectures (1), (2) and (3) have been checked for all Pfaffian cubic fourfolds.

\subsection*{Automorphisms}

Actions of a finite group $G$ on $S$ and $X$ induce actions on 
related geometric objects:
\begin{itemize}
\item the punctual 
Hilbert schemes, 
\item the varieties of rational curves on $X$, e.g., $F=F_1(X)$,
\item (polarized) Hodge structures (if $G$-preserves the  polarizations), 
\item derived categories; note that if the $G$-action on $X\subset \bP^5$ arises from a (projectively) linear action on $\bP^5$, then we obtain a natural $G$-action on the Kuznetsov component $\cA_X$. 
\end{itemize}
A useful notion is that of {\em symplectic} automorphisms $G_s\subseteq G$: in the case of K3 surfaces these act trivially on $\rH^{2,0}(S,\bZ)$ and for cubic fourfolds on $\rH^{3,1}(X,\bZ)$. In both cases, there is an exact sequence
$$
1\to G_s\to G\to C_m\to 1.
$$
All finite automorphisms of K3 surfaces have been classified, see \cite{BH}. Symplectic automorphisms of cubic fourfolds have been classified in \cite{LZ}.

The Torelli theorem implies that we have embeddings 
$$
\Aut(S) \hookrightarrow \mathrm O(\rL_S),
$$
$$
\Aut(X) \hookrightarrow \mathrm O(\rL_X),
$$
the group of isometries of the lattices
$$
\rL_S:=\rH^2(S,\bZ)_{\rm pr}, \quad \rL_X:=\rH^4(X,\bZ)_{\rm pr},
$$ 
the latter group coinciding with the group of Hodge isometries of $\rH^4(X,\bZ)$ fixing the polarization. 

In a similar vein, one has injective homomorphisms
$$
\Aut(S)\hookrightarrow \mathrm{Auteq}(\rD^b(S)), \quad 
\Aut(X)\hookrightarrow \mathrm{Auteq}(\cA_X),
$$
into the group of autoequivalences of the corresponding categories, see, e.g., \cite[Theorem 1.3]{Ouchi}. 

Given the naturality of the above constructions, one would expect the following versions of rationality conjectures: 
\begin{itemize}
\item[(1$G$)]
There is a primitive isometric embedding of $G$-Hodge structures  
\begin{equation*}
\label{eqn:**}
\rH^2(S,\bZ)_{\mathrm{pr}}\hookrightarrow  
\rH^4(X,\bZ)_{\mathrm{pr}}(1),
\end{equation*}
for some polarized $G$-K3 surface $(S, h)$.
\item[(2$G$)]
There is a $G$-equivariant 
equivalence of $k$-linear triangulated categories
$$
\rD^b(S) \simeq \cA_X,
$$
for some $G$-K3 surface $S$.
\item[(3$G$)]
There exists a {$G$-equivariant birationality} 
$$
F_1(X)\sim S^{[2]},
$$
for some $G$-K3 surface $S$. 
\end{itemize}
In Section~\ref{sect:examples}, we present counterexamples to all three statements. These are based on a $G$-equivariant Pfaffian construction, in which case both $X$ and $S$ carry {\em compatible} $G$-actions.

\section{Automorphisms and Hodge structures}
\label{sect:hodge}

Let $F=F_1(X)$ be the variety of lines of a smooth cubic fourfold $X\subset \bP^5$. Let 
$$
P\subset F \times X
$$ 
be the universal line/incidence correspondence, with projections 
$$
p\colon P \to F, \quad q\colon P \to X.
$$
By 
\cite{BD85}, we have the Abel-Jacobi map
\[
\alpha \colon \rH^4 (X, \bZ ) \to \rH^2 (F , \bZ)(-1),
\]
where $\alpha= p_* q^*$; here we use Poincar\'{e} duality twice to make sense of $p_*$. This homomorphism is an isomorphism of polarized Hodge structures, with the natural polarization on $X$ and Beauville-Bogomolov form on the Picard group of the holomorphic symplectic variety $F$. 

Given a regular $G$-action on $X$ we obtain a natural $G$-action on $F$, and on the associated Hodge structures.  
As $p, q$ are $G$-morphisms and Poincar\'{e} duality is compatible with the natural $G$-actions on homology and cohomology, $\alpha$ is an isomorphism of $G$-Hodge structures in this case; passing to primitive cohomology we obtain a $G$-equivariant isomorphism of polarized Hodge structures
\begin{equation}
\label{eqn:alpha}
\alpha \colon \rH^4 (X, \bZ )_{\mathrm{pr}} \stackrel{\sim}{\longrightarrow} \rH^2 (F , \bZ)_{\mathrm{pr}}(-1). 
\end{equation}

If $X$ is Pfaffian and $S$ is the associated K3 surface
then we have a {\em birational} isomorphism
\begin{equation}
\label{eqn:phi}
\varphi: S^{[2]} \stackrel{\sim}{\dashrightarrow} F,
\end{equation}
constructed as follows: 
fixing general points 
$$
\mathfrak p, \mathfrak q \in S=\mathrm{Gr}(2, V^*) \cap \bP (L^{\perp})
$$ 
we regard them as $2$-planes in $V^*$ and consider their span, a $4$-plane in $V^*$. The two-forms in $\bP(L)\subset \bP(\Lambda^2 V)$ that are zero on $\mathfrak p + \mathfrak q$ form a line in $X$. This extends to the birational isomorphism \eqref{eqn:phi}, which is an {\em isomorphism} if $S$ does not contain a line and $X$ does not contain a plane, by \cite{BD85}. 

By \cite{HJDG},  $\varphi$ induces a primitive isometric embedding of polarized Hodge structures  
\begin{equation}
\label{eqn:sx}
\rH^2(S,\bZ)_{\mathrm{pr}}\hookrightarrow  \rH^4(X,\bZ)_{\mathrm{pr}}(1)
\end{equation}
since 
\[
\rH^2 (S^{[2]}, \bZ ) \simeq \rH^2 (S, \bZ ) \oplus \bZ \delta, 
\]
as polarized Hodge structures. Here $2\delta$ is the divisor corresponding to length-$2$ non-reduced subschemes of $S$; concretely, 
one has a natural blowup morphism
\[
\epsilon\colon S^{[2]} \to S^{(2)}
\]
resolving the singularities of the second symmetric product 
$S^{(2)}$, the map $\epsilon$ associates to a subscheme its associated zero cycle.

All of the above constructions are obviously $G$-equivariant. 
The following theorem, proved by Brendan Hassett in the Appendix, ensures that  \eqref{eqn:sx} is valid in the $G$-equivariant context as well.

\begin{theo}
\label{thm:hodge}
Let $\phi: Y'\dashrightarrow Y$ be a $G$-equivariant birational map of smooth projective holomorphic symplectic varieties over $k=\bC$. Then 
there exists an isomorphism of $G$-Hodge structures
$$
\psi: \rH^2(Y,\bZ)\to \rH^2(Y',\bZ).
$$
\end{theo}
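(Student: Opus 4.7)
The plan is to upgrade the classical isomorphism $\rH^2(Y,\bZ) \cong \rH^2(Y',\bZ)$ for birational smooth projective holomorphic symplectic manifolds (due to Huybrechts) to a $G$-equivariant statement. The only substantive input is the fact, proved in the non-equivariant setting, that any such birational map is an isomorphism in codimension one; the $G$-equivariance of everything else follows formally from the functoriality of the construction.

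First, I would let $U' \subset Y'$ be the maximal open subset on which $\phi$ is a morphism restricting to an open immersion, and set $U := \phi(U') \subset Y$. By the codimension-one isomorphism property for birational maps of irreducible holomorphic symplectic manifolds, both $Y' \setminus U'$ and $Y \setminus U$ have complex codimension at least $2$. Since the defining condition of $U'$ is intrinsic to $\phi$, and $\phi$ is assumed to be $G$-equivariant, $U'$ is $G$-stable; hence so is $U$, and $\phi$ restricts to a $G$-equivariant biholomorphism $\phi|_{U'}\colon U' \xrightarrow{\sim} U$.

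Next, I would establish the following cohomological lemma: for any smooth complex projective variety $W$ with regular $G$-action and any $G$-stable closed subset $Z \subset W$ of codimension $\geq 2$, the restriction
$$
\rH^2(W,\bZ) \longrightarrow \rH^2(W \setminus Z,\bZ)
$$
is a $G$-equivariant isomorphism of mixed Hodge structures. When $Z$ is smooth this follows directly from the Gysin long exact sequence, since the flanking terms $\rH^{-2}(Z)$ and $\rH^{-1}(Z)$ vanish. In general, one proceeds by induction on $\dim Z_{\mathrm{sing}}$, using a $G$-equivariant Whitney stratification of $Z$ by smooth locally closed $G$-stable subsets of codimension $\geq 2$ in $W$; the existence of such a stratification is routine since $G$ is finite and averages of strata are strata.

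Applying this lemma to the pairs $(Y, Y \setminus U)$ and $(Y', Y' \setminus U')$ yields $G$-Hodge isomorphisms $\rH^2(Y,\bZ) \cong \rH^2(U,\bZ)$ and $\rH^2(Y',\bZ) \cong \rH^2(U',\bZ)$, while pullback under the $G$-biholomorphism $\phi|_{U'}$ provides $\rH^2(U,\bZ) \cong \rH^2(U',\bZ)$. Composing these three isomorphisms defines $\psi$. The only genuinely non-formal step is the codimension-one isomorphism property in Step 1, which is where the holomorphic symplectic hypothesis is essential; once it is available, the $G$-equivariance of $\psi$ is automatic.
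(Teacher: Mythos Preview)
Your argument is correct, and it takes a genuinely different (and more elementary) route than the paper's. You use that a birational map between smooth projective holomorphic symplectic varieties is an isomorphism in codimension one---a fact the paper also records in its first lemma---together with the topological observation that removing a closed subset of complex codimension $\ge 2$ does not change $\rH^2(\,\cdot\,,\bZ)$; $G$-equivariance and compatibility with (mixed) Hodge structures then follow by naturality of restriction and pullback. This is entirely sufficient for the statement as written.

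The paper instead runs a $G$-equivariant version of Huybrechts' deformation-theoretic argument: choose a $G$-linearized ample class $L'$ on $Y'$ and its transform $L$ on $Y$, pass to the polarized deformation spaces $\operatorname{Def}(Y,L)\simeq\operatorname{Def}(Y',L')$ with their induced $G$-actions, produce a $G$-equivariant pointed curve $(S,0)$ in the deformation space over which the two families become genuinely isomorphic away from $0$, and specialize the resulting isomorphism of local systems. The payoff is an isomorphism of $G$-Hodge structures on \emph{all} of $\rH^*(Y,\bZ)$, automatically compatible with the Beauville--Bogomolov form. Your excision argument only reaches $\rH^k$ for $k\le 3$, so it cannot deliver the full cohomology; but for $\rH^2$ alone it is shorter and avoids the deformation machinery entirely. (Incidentally, your $\psi=\phi^*$ is also a BBF-isometry, by the Fujiki relation and the fact that top-degree integrals are computed on the common open set, though you need not invoke this.)
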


\section{Automorphisms and Kuznetzov components via stability conditions}
\label{sect:aut-kuz}

We recall results from \cite{Ouchi}, connecting actions of 
automorphisms on derived categories of 
Pfaffian cubic fourfolds with those on associated K3 surfaces. 

A labelled cubic fourfold of discriminant $d$ is a pair $(X, K)$ consisting of a smooth cubic fourfold $X$ and a rank $2$ primitive sublattice $K \subset \rH^{2,2}(X, \bZ)$ containing $h^2$, where $h$ is the hyperplane class, and of discriminant $d=\mathrm{disc}(K)$. The subgroup of labeled automorphisms 
$$
\Aut(X,K):=\{ f\in \Aut(X) \, \mid \, f|_K =1\} \subset \Aut(X)
$$
consists of automorphisms fixing every element of $K$. 
Assume that $d$ satisfies 
\begin{itemize}
\item[(*)] $d>6$ and $d\equiv 0$ or $2 \pmod{6}$,
\item[(**)] $d$ is not divisible by 4,9 or odd primes $p\equiv 2 \pmod{3}$.
\end{itemize}
These conditions are {\em equivalent} to the rationality of $X$, via Conjecture (1),
and imply the existence of an associated K3 surface $S$ such that 
$$
\rH^2(S,\bZ)_{\mathrm{pr}}\hookrightarrow  \rH^4(X,\bZ)_{\mathrm{pr}}(1).
$$
Given any object
$$
\cE\in \rD^b(S\times X)
$$
we obtain in the standard way the Fourier-Mukai functor 
$$
\Phi_{\cE}: \rD^b(S)\to \cA_X
$$
(where we tacitly compose with the projection functor $\rD^b(X) \to \cA_X$ to get to $\cA_X$). 
If $\Phi_{\cE}$ is an equivalence and 
$f\in \Aut(X)$ we get the corresponding autoequivalence
$$
f_{\cE} :=\Phi^{-1}_{\cE} \circ f_* \circ \Phi_{\cE} : \rD^b(S) \to \rD^b(S)
$$
via the diagram

\centerline{
\xymatrix{
\rD^b(S) \ar[d]_{f_{\cE}} \ar[r]^{\Phi_{\cE}}  & \cA_X \ar[d]^{f_*}\\
\rD^b(S) \ar[r]^{\Phi_{\cE}} & \cA_X 
}
}

\

We recall the main theorem from \cite{Ouchi}:

\begin{theo}
\label{thm:oouchi}
For $d$ satisfying {\rm(*)} and {\rm(**)} as above there exists an 
$$
\cE\in \rD^b(S\times X)
$$ such that 
$\Phi_{\cE}$ is an equivalence. Moreover, if we start with an automorphism $f\in \mathrm{Aut}(X, K)$ in the labelled automorphism group $\mathrm{Aut}(X, K)$, then 
$f_{\mathcal{E}}$ is in the image of the natural embedding
\[
\mathrm{Aut}(S, h) \hookrightarrow \mathrm{Auteq}(\mathrm{D}^b (S))
\]
and the induced map 
\[
\mathrm{Aut}(X, K) \to \mathrm{Aut}(S, h)
\]
is an isomorphism. 
\end{theo}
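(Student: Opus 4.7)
The plan is to build $\Phi_{\cE}$ via Bridgeland stability conditions on $\cA_X$, following the Bayer--Lahoz--Macrì--Stellari construction. Conditions (*) and (**) on $d$ are exactly those of Addington--Thomas, so the Mukai-theoretic lattice of $\cA_X$ admits a primitive isotropic vector $v$ arising from an extension of $K$ to the Mukai lattice, and $v^{\perp}/v$ is Hodge-isometric to $\rH^2(S,\bZ)_{\mathrm{pr}}$ for a polarized K3 surface $(S,h)$. First I would realize $S$ as the moduli space $M_{\sigma}(v)$ of $\sigma$-stable objects in $\cA_X$ of Mukai vector $v$, for a generic $\sigma$ in the BLMS family, and take $\cE\in\rD^b(S\times X)$ to be the universal object (composed with the projection $\rD^b(X)\to\cA_X$ implicitly). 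Standard moduli-theoretic arguments then give that $\Phi_{\cE}$ is an equivalence.

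Next I would show that for $f\in\Aut(X,K)$ the conjugated autoequivalence $f_{\cE}=\Phi_{\cE}^{-1}\circ f_*\circ\Phi_{\cE}$ of $\rD^b(S)$ lies in $\Aut(S,h)\subset\mathrm{Auteq}(\rD^b(S))$. Since $f$ fixes $K$ pointwise, the induced action of $f_*$ on the Mukai lattice of $\cA_X$ fixes $v$ together with the class determining the polarization. Hence $f_*$ sends the family of $\sigma$-stable objects with Mukai vector $v$ to a family of $f_*(\sigma)$-stable objects with the same Mukai vector. Using the $\widetilde{GL}_2^+(\bR)$-action on the space of Bridgeland stability conditions to identify $f_*(\sigma)$ with $\sigma$ up to this action, one obtains a biholomorphism $g_f:S\to S$; universality of $\cE$ then yields a canonical $2$-isomorphism $f_{\cE}\simeq g_{f,*}$. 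Since $h$ is recovered from $v$ and the Hodge-isometric embedding of $\rH^2(S,\bZ)_{\mathrm{pr}}$ into the Mukai lattice, which is $f_*$-equivariant, $g_f$ preserves $h$.

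Finally, to see that $\Aut(X,K)\to\Aut(S,h)$ is an isomorphism, I would compare both sides through the transcendental lattices, using that the embedding from (1) identifies the transcendental part of $\rH^4(X,\bZ)_{\mathrm{pr}}$ with that of $\rH^2(S,\bZ)_{\mathrm{pr}}$. Injectivity follows from the Torelli theorem for cubic fourfolds: if $g_f=\mathrm{id}$ then $f_*$ acts trivially on the transcendental part of $\rH^4(X,\bZ)$, and, combined with $f|_K=1$, acts trivially on all of $\rH^4(X,\bZ)$, forcing $f=\mathrm{id}$. For surjectivity, any $g\in\Aut(S,h)$ gives an isometry of $\rH^2(S,\bZ)$ that extends, via the embedding in (1), to a Hodge isometry of $\rH^4(X,\bZ)$ fixing $K$ and the polarization; Torelli produces a unique $f\in\Aut(X,K)$ realizing it, and running the second paragraph in reverse identifies $f_{\cE}$ with $g_*$. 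The main obstacle I expect is the step showing that $f_*$ preserves the BLMS stability condition up to $\widetilde{GL}_2^+(\bR)$ (so that the moduli space is sent biholomorphically to itself and not to a Fourier--Mukai partner); this is exactly the point where the exclusions encoded in (*) and (**) — ruling out spherical objects in $\cA_X$ with Mukai vector close to $v$ and the corresponding exotic autoequivalences — must be used in a decisive way.
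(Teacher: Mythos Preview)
The paper does not prove this statement; it is quoted as ``the main theorem from \cite{Ouchi}'' and used as a black box. So there is no in-paper proof to compare against. Your outline is in fact close to Ouchi's own strategy: BLMS stability conditions on $\cA_X$, realization of $S$ as a fine moduli space $M_\sigma(v)$ of $\sigma$-stable objects, the universal family as Fourier--Mukai kernel, and transport of automorphisms through the equivalence, with Torelli theorems on both sides to establish the isomorphism $\Aut(X,K)\simeq\Aut(S,h)$.

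One correction is needed in your diagnosis of the ``main obstacle''. Conditions (*) and (**) are purely the numerical Hassett/Addington--Thomas conditions ensuring that $\cA_X\simeq\rD^b(S)$ for an honest (untwisted) K3; they are not what controls whether $f_*$ preserves the relevant stability conditions, and they do not function by ruling out spherical objects or exotic autoequivalences. That step is handled structurally: every $f\in\Aut(X)$ arises from a linear automorphism of the ambient $\bP^5$, hence fixes each $\cO_X(i)$, hence preserves the weak-stability/tilt data on $\rD^b(X)$ from which the BLMS family on $\cA_X$ is induced. Thus $f_*$ carries the distinguished BLMS component of the stability manifold to itself; combined with $f_*v=v$ (from $f|_K=\mathrm{id}$) this yields the automorphism $g_f$ of $S=M_\sigma(v)$ directly, with no further appeal to (*)/(**) beyond the existence of $v$.

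Your surjectivity paragraph is also too quick as written: extending a polarized Hodge isometry of $\rH^2(S,\bZ)$ to one of $\rH^4(X,\bZ)$ fixing $h^2$ is a genuine lattice-gluing argument over discriminant groups, and invoking Torelli for cubic fourfolds requires verifying that the extended isometry satisfies the appropriate sign/monodromy condition. Ouchi handles both the injectivity and surjectivity through a careful comparison of the Hodge isometry groups of the orthogonal complements $K^{\perp}$ and $h^{\perp}$, which is where the actual work lies.
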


This means that given a $G$-action on a smooth cubic fourfold $X$
fixing the sublattice $K\subset \rH^{2,2}(X,\bZ)$ as above, there exists 
a polarized associated K3 surface $(S,h)$, with a $G$-action on $S$ preserving the polarization $h$, such that 
$\rD^b(S)$ is equivariantly equivalent to $\cA_X$. Note however, that 
there may be nonisomorphic but derived equivalent K3 surfaces. 
Under some assumptions on $G$, the uniqueness of $S$ follows, e.g., if 
the subgroup of symplectic automorphisms $G_s\subseteq G$ is not the trivial group or the cyclic group $C_2$ \cite[Theorem 8.4.]{Ouchi}. However, {\em a priori} it is not guaranteed that different $G$-actions on $S$ are related by an autoequivalence in $\rD^b(S)$.  
There are examples of $G\subset \Aut(S)$ which are not conjugated by automorphisms of $S$ but are conjugated via autoequivalences of $\rD^b(S)$ \cite[Section 8]{HT23}.

\section{Automorphisms and Kuznetsov components via equivariant HPD} 
\label{sect:hpd}

We investigate the Homological Projective Duality (HPD) construction in presence of actions of finite groups $G$, in the special case of the Pfaffian construction, as described in \cite{kuz-pfaffian}. This will allow us to construct a functor that identifies, $G$-equivariantly, the Kuznetsov component of a Pfaffian $G$-cubic fourfold with the derived category of the associated $G$-K3 surface.

We work over an algebraically closed field $k$ of characteristic zero, and adhere to the notation of \cite{Kuz-IHES} and \cite{kuz-pfaffian} (which differs from the notation in \cite{Ouchi} and our notation in other sections). We first explain the general structure of (HPD), following \cite{Kuz-IHES}. 
A {\em Lefschetz decomposition} of a derived category $\rD^b(X)$ is a semiorthogonal decomposition of the form
$$
\rD^b(X) = \langle \cA_0, \cA_1,\ldots, \cA_{i-1}(i-1) \rangle,
$$
where 
$$
0\subset \cA_{i-1}, \ldots, \cA_1\subset \cA_0\subset \rD^b(X)
$$ is a chain of {\em admissible} subcategories of $\rD^b(X)$ \cite[Definition 4.1]{Kuz-IHES}.

Let $V$ be a vector space over $k$ and 
$$
Q\subset \bP(V)\times \bP(V^*)
$$
the incidence quadric. 
An algebraic variety $g:=Y\hookrightarrow \bP(V^*)$ is called {\em projectively dual}
to $f:X\hookrightarrow \bP(V)$, with respect to a fixed Lefschetz decomposition on $X$, if there exists an $\cE\in \rD^b(Q(X,Y))$, where
$$
Y\times_{\bP(V^*)} \cX_1=Q(X,Y):= (X\times Y)\times_{\bP(V)\times \bP(V^*)}Q
$$
and $\cX_1$ is the universal hyperplane section of $X$, 
such that the corresponding kernel functor
$$
\Phi=\Phi_{\cE} : \rD^b(Y)\to \rD^b(\cX_1)
$$
is fully faithful and gives the following semiorthogonal decomposition
$$
\rD^b(\cX_1) = \langle \Phi(\rD^b(Y)), \cA_1(1)\boxtimes \rD^b(\bP(V^*)), \ldots, 
 \cA_{i-1}(i-1)\boxtimes \rD^b(\bP(V^*))
\rangle.
$$
The main result \cite[Theorem 6.3]{Kuz-IHES} says that 
\begin{itemize}
\item if $X$ is smooth then $Y$ is smooth and admits a {\em canonical} dual Lefschetz decomposition, 
\item there is a base-change functor (see \cite[Section 2]{Kuz-IHES}) which allows to restrict these structures to an {\em admissible} linear subspace $L\subset V^*$. In detail, if 
$$
X_L:=X\times_{\bP(V)} \bP(L^\perp), \quad Y_L:=Y\times_{\bP(V^*)} \bP(L)
$$
then there is a {\em canonical} decomposition of their categories.  
\end{itemize}
The main point of \cite{kuz-pfaffian} is to produce  the Fourier-Mukai kernel $\cE$ 
and check the required properties.
We introduce the following actors:
\begin{enumerate}
\item $X:=\mathbf G=\Gr(2,W)$, where $W$ is a 6-dimensional vector space,
\item $\cU\subset W\otimes \cO_X$ the tautological subbundle of rank 2 on $X$, 
\item $\mathbb P := \mathbb P(V^*)$,  $\mathbb P^\vee := \bP(V)$, 
where $V:=\wedge^2 W$, 
\item $Q\subset \bP^\vee \times \bP$, the incidence quadric, 
\item $\cX=\cX_1\subset X\times \mathbb P$ is the universal hyperplane section of $X$,  
\item $Y:=\mathrm{Pf}(W^*)\hookrightarrow \mathbb P$,
\item $\tilde{Y}\subset Y\times \mathbb G$ is the incidence correspondence; we have
$$
\tilde{Y}=\bP_{\mathbf G}(\wedge^2\cK^\perp),
$$
where $\cK\subset W\otimes \cO_{\mathbf G}$ is the tautological subbundle of rank 2, and $\cK^\perp \subset W^*\otimes \cO_{\mathbf G}$ is its orthogonal,
\item the projections 
$$
g_Y:\tilde{Y} \to Y \quad \text{ and } \quad \zeta: \tilde{Y} \to \mathbf G,
$$
\item $\mathcal R:={g_Y}_* \mathcal{E}nd(\cO_{\tilde{Y}} \oplus \cK)$, a sheaf of Azumaya algebras on $Y$, 
\item the incidence quadric
$$
Q(X,\tilde{Y}):= (X\times \tilde{Y})\times_{(\bP^\vee \times \bP)} Q \stackrel{j}{\longrightarrow} \cX\times \tilde{Y}, 
$$
with its embedding, the pullback of $Q$ under the composition
$$
X\times \tilde{Y} \to X\times Y\hookrightarrow \bP(\wedge^2W)\times \bP(\wedge^2W^*),
$$
forming a divisor 
$$
Q(X,\tilde{Y})\subset X\times \tilde{Y},
$$
\item the locus of pairs of intersecting subspaces 
$$
X\times \mathbf G =\Gr(2,W) \times \Gr(2,W)
$$
and its preimage 
$$
T\subset X\times \tilde{Y}
$$
under the natural morphism
$$
(\mathrm{id} \times \zeta) :  X\times \tilde{Y} \ra X\times \mathbf G,
$$
note that 
$$
T\subset Q(X,\tilde{Y}),
$$
\item the bundle 
$$
\cE:=\cJ_{T,Q(X,\tilde{Y})}(H_X+H_{\mathbf G}),
$$
the sheaf of ideals of $T$ in $Q(X,\tilde{Y})$ twisted by the sum of hyperplane classes of $X$ and of $\mathbf G$, 
\end{enumerate}

The main difference to the original HPD is that the role of $Y$ (and its derived category) is now played by the pair $(Y,\cR)$ and 
$$
\rD^b(Y,\cR), 
$$
which is the derived category of coherent sheaves of right $\cR$-modules. By \cite[Theorem 3.2]{kuz-pfaffian}, there is a fully faithful embedding of 
$$
\rD^b(Y,\cR)\hookrightarrow \rD^b(\tilde{Y}),
$$
as an admissible subcategory, with image denoted by $\tilde{D}$. 
 
Consider the coherent sheaf
$$
j_*\cE \in \mathsf{Coh}(\cX\times \tilde{Y}).
$$
By \cite[Lemma 8.2]{kuz-pfaffian}, and the discussion on p. 11 of that paper, we can view $j_*\cE$ as an object in 
$$
\rD^b(X\times Y,\cO_X\boxtimes \cR^{\rm opp}),
$$
derived category of coherent sheaves of right modules over
$\cO_X\boxtimes \cR^{\rm opp}$. 
The associated kernel functor
$$
\Phi_{j_*\cE}:\rD^b(Y,\cR)\to \rD^b(\cX)
$$
is fully faithful, by \cite[Corollary 9.16]{kuz-pfaffian}.

\medskip

First, writing $V=\Lambda^2 W$, we consider the Grassmannians
\[
\mathbf{G}_6 := \mathrm{Gr} (6, V^*), 
\]
parametrizing $6$-dimensional subspaces of $V^*$, with tautological subbundle 
\[
\mathcal{L}_6 \subset V^* \otimes \cO_{\mathbf{G}_6}
\]
and denote by 
\[
\mathcal{L}_6^{\perp} \subset V \otimes \cO_{\mathbf{G}_6}
\]
the orthogonal subbundle. The universal families of linear sections of $X$ of interest to us are 
\begin{align*}
    \mathcal{X}_6 &= (X \times \mathbf{G}_6) \times_{\bP (V) \times \mathbf{G}_6} \mathbb{P}_{\mathbf{G}_6} (\mathcal{L}_6^{\perp}) \\
    \widetilde{\mathcal{Y}}_6 & = (\tilde{Y} \times \mathbf{G}_6) \times_{\bP (V^*) \times \mathbf{G}_6} \mathbb{P}_{\mathbf{G}_6} (\mathcal{L}_6), 
\end{align*}
but actually one considers pairs 
$$
(\cY_6,\cR_6)
$$
consisting of the variety 
$$
\cY_6:=(Y\times \mathbf G_6) \times_{\bP (V^*) \times \mathbb{G}_6} \mathbb{P}_{\mathbf{G}_6} (\mathcal{L}_6), 
$$
together with a sheaf of Azumaya algebras $\cR_6$, obtained by pullback.

These are fibred over $\mathbf{G}_6$; note that the fibres over a (sufficiently general) $L$ are just $X_L$, the K3 surface associated to $Y_L$, a Pfaffian cubic fourfold.  We consider the natural projection
\[
\mathcal{X}_6 \times_{\mathbf{G}_6} \mathcal{Y}_6 \to X \times Y 
\]
and denote by $\mathcal{E}_6$ the pull-back of $j_* \mathcal{E}$, as a sheaf of Azumaya algebras, to $\mathcal{X}_6 \times_{\mathbf{G}_6} \mathcal{Y}_6$, viewed as a sheaf on $\mathcal{X}_6 \times \mathcal{Y}_6$. 

Base-changing to $[L]\in \mathbf{G}_6$ gives similar objects, which we denote by the same symbols as above, but with an added subscript $L$, e.g., 
\[
\mathcal{E}_{6, L}, \quad \mathcal{X}_{6, L} \times \mathcal{Y}_{6, L} = X_L \times Y_L. 
\]
We get the functor 
\[
\Phi_6 \colon \mathrm{D}^b (\mathcal{Y}_6,\cR_6) \to \mathrm{D}^b(\mathcal{X}_6), 
\]
induced by $\mathcal{E}_6$. 
With our notational conventions, we also get functors 
\[
\Phi_{6, L}\colon \mathrm{D}^b (Y_L) \to \mathrm{D}^b(X_L). 
\]
Then Kuznetsov shows, in the commutative context, that both $\Phi_6$ and the $\Phi_{6,L}$ are {\em splitting}, in the sense of \cite[Section 3]{Kuz-IHES}; this uses the faithful base change theorems \cite[Section 2.8]{Kuz-IHES}. The base change theorem holds as well in the context of varieties equipped with sheaves of Azumaya algebras (called {\em Azumaya varieties})
\cite[Section 2.6, and Proposition 2.43]{kuz-hyper}. 
The splitting property of $\Phi_6$, in the context of more general {\em noncommutative varieties}, which include derived categories of Azumaya varieties over fields, is established in \cite[Theorem 8.4]{Perry}. The argument proceeds by induction on dimension of linear sections in the universal families, starting with hyperplanes, and is essentially the one presented for {\em varieties} in \cite[Section 6]{kuz-pfaffian}.

In particular, if we restrict $\Phi_{6, L}$ to the Kuznetsov component $\mathcal{A}_{Y_L}$ in 
\[
\mathrm{D}^b (Y_L, \mathcal{R})=  \mathrm{D}^b (Y_L)= \langle \mathcal{O}(-3), \mathcal{O}(-2), \mathcal{O}(-1), \mathcal{A}_{Y_L} \rangle, 
\]
where $Y_L$ is a smooth Pfaffian cubic fourfold, we obtain an  equivalence  
\[
\mathcal{A}_{Y_L} \simeq\mathrm{D}^b (X_L),
\]
which in this case is the derived category of the associated K3 surface $X_L$. 
All of the above constructions are $G$-equivariant if we endow $W$ with a linear $G$-action. To summarize, we have:

\begin{prop}
\label{prop:kuz-sum}
Let $G$ be a finite group with a faithful 6-dimensional representation $W$. Assume that $\wedge^2(W)^*$ contains a 6-dimensional subrepresentation $L$. Then the functor $\Phi_{6,L}$ induces an equivalence of $G$-categories
\[
\mathcal{A}_{Y_L} \simeq\mathrm{D}^b (X_L).
\]
\end{prop}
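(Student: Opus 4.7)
The plan is to note that Kuznetsov's nonequivariant equivalence $\mathcal{A}_{Y_L}\simeq \rD^b(X_L)$ has already been established via the Pfaffian HPD machinery of \cite{kuz-pfaffian} and its noncommutative extension in \cite{Perry}, so the only real task is to verify that every ingredient of that construction carries a canonical $G$-linearization and that these linearizations are compatible with the Fourier--Mukai kernel $\mathcal{E}_{6,L}$. If one can show that $\Phi_{6,L}$ is an equivariant Fourier--Mukai transform between the ambient $G$-categories $\rD^b(\mathcal{Y}_{6,L},\mathcal{R}_{6,L})=\rD^b(Y_L)$ and $\rD^b(X_L)$, then the semiorthogonal decomposition it induces is $G$-stable piece by piece, and the earlier proposition of the paper (that the orthogonal to a $G$-stable admissible subcategory is $G$-stable) forces the equivalence to restrict to an equivalence of $G$-categories on the Kuznetsov component.

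First I would trace the $G$-action through the construction. A faithful representation of $G$ on $W$ induces linear actions on $V=\Lambda^2 W$, on $V^*$, and hence on the projective spaces $\bP(V)$ and $\bP(V^*)$; on the Grassmannian $X=\Gr(2,W)$, on the Pfaffian variety $Y=\mathrm{Pf}(W^*)\subset \bP(V^*)$, and on the incidence quadric $Q$. The tautological bundles $\mathcal{U}$, $\mathcal{K}$, and $\mathcal{K}^{\perp}$ are all $G$-equivariant because they are defined by $G$-equivariant universal properties; consequently $\tilde Y=\bP_{\mathbf G}(\Lambda^2\mathcal{K}^{\perp})$ acquires a regular $G$-action, and the sheaf of Azumaya algebras $\mathcal{R}={g_Y}_*\mathcal{E}nd(\cO_{\tilde Y}\oplus\mathcal{K})$ inherits a canonical $G$-linearization as the pushforward of an endomorphism sheaf of a $G$-linearized bundle. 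The hypothesis that $L\subset \Lambda^2(W)^*$ is a six-dimensional subrepresentation means precisely that $[L]\in \mathbf{G}_6$ is a $G$-fixed point, so the base change along $[L]$ produces $G$-subvarieties $X_L$ and $(Y_L,\mathcal{R}_L)$ of the universal families $\mathcal{X}_6$ and $(\mathcal{Y}_6,\mathcal{R}_6)$.

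Next I would check that the Fourier--Mukai kernel $\mathcal{E}=\cJ_{T,Q(X,\tilde Y)}(H_X+H_{\mathbf G})$ is $G$-linearized: both $Q(X,\tilde Y)$ and the intersection locus $T$ are cut out by $G$-equivariant incidence conditions, and the twisting line bundle $\cO(H_X+H_{\mathbf G})$ carries its standard $G$-linearization, so $j_*\mathcal{E}$ is canonically a $G$-linearized object of $\rD^b(X\times Y,\cO_X\boxtimes \mathcal{R}^{\mathrm{opp}})$; base change preserves this linearization and yields a $G$-linearized $\mathcal{E}_{6,L}$. Kernel functors given by $G$-linearized kernels between $G$-varieties (or $G$-Azumaya varieties) are automatically equivariant. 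Combined with the splitting/fully-faithfulness statements of \cite[Theorem~8.4]{Perry}, this shows that the induced semiorthogonal decomposition of $\rD^b(X_L)$ is a decomposition of $G$-categories, whose piece corresponding to $\mathcal{A}_{Y_L}$ is therefore the $G$-equivariantly embedded Kuznetsov component; the remaining pieces $\cO(-3),\cO(-2),\cO(-1)$ are evidently $G$-stable.

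The main obstacle will be the bookkeeping of linearizations through the noncommutative base-change machinery: in particular, one must ensure that Perry's faithful base change for Azumaya varieties \cite[Section~2.6]{kuz-hyper} is compatible with the natural $G$-linearization on $\mathcal{R}$, and that the canonical dual Lefschetz decomposition of $(Y,\mathcal{R})$ respects the $G$-action rather than being unique only up to twists. Provided one handles this carefully — most naturally by observing that the entire HPD setup of \cite{Kuz-IHES} is functorial with respect to $G$-equivariant automorphisms of the input data $(W,L)$ — the equivariance of $\Phi_{6,L}$ follows, completing the proof.
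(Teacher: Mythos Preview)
Your proposal is correct and follows essentially the same route as the paper: the proposition is stated there as a summary of the preceding discussion in Section~\ref{sect:hpd}, whose ``proof'' is the single clause that all of Kuznetsov's HPD constructions (the tautological bundles, $\tilde Y$, $\mathcal{R}$, the kernel $\mathcal{E}=\cJ_{T,Q(X,\tilde Y)}(H_X+H_{\mathbf G})$, and the base-change machinery via \cite{kuz-hyper} and \cite{Perry}) are canonical and hence $G$-equivariant once $W$ carries a linear $G$-action and $L$ is a subrepresentation. Your write-up is more explicit about the individual linearizations and the role of the earlier proposition on $G$-stability of orthogonals, but the argument is the same.
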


\section{Equivariant birational geometry}\label{sect:examples}

In this section, we work over an algebraically closed field $k$ of characteristic zero. 
We write
$$
X\sim_G Y,
$$
when the $G$-varieties $X$ and $Y$ over $k$ are $G$-birational.

Standard examples include {\em linear} or {\em projectively linear} actions of $G$, i.e., generically free actions of $G$ on $\bP^n=\bP(V)$ arising from a linear faithful representation $V$ of $G$, respectively, a linear representation of a central extension of $G$ with center acting trivially on $\bP(V)$. 
Among the main problems in $G$-equivariant birational geometry is to identify: 
\begin{itemize}
\item[({\bf L})]
{\em (projectively) linearizable} actions, i.e., 
$$
X\sim_G \bP(V),
$$
\item[({\bf SL})] 
{\em stably (projectively) linearizable} actions, i.e., 
$$
X\times \bP^m \sim_G \bP(V),
$$
with trivial action on the second factor. 
\end{itemize}

In particular, as a variety, $X$ is {\em (stably) rational} over $k$. 
Note that the same variety, even $\bP^n$, can sometimes be equipped with equivariantly nonbirational actions of the same group. The classification of such actions is ultimately linked to the classification of embeddings of $G$ 
into the Cremona group, up to conjugation in that group.

\subsection*{Nonlinearizable actions on hypersurfaces}

There are many instances when $G$-actions on varieties of dimension $d$ are not (projectively) linearizable for the simple reason that $G$ does not admit (projectively) linear actions on $\bP^{d}$. 

An example of this situation is the
Del Pezzo surface of degree 5, viewed as the moduli space of 5 points on $\bP^1$, with the natural action of $\fS_5$; there are no regular actions of $\fS_5$ on $\bP^2$.

Other examples are, possibly singular, hypersurfaces $$
X\subset \bP(V), \quad \dim(V)=q-1,
$$
for some prime power $q>3$, admitting the action of the Frobenius group
$$
G=\mathrm{AGL}_1(\bF_q),
$$
for the finite field $\bF_q$. Indeed, the smallest faithful representation  of $G$ is its unique irreducible representation $V$,  of dimension $q-1$, so that the $G$-action is not linearizable. 
In many cases, $G$ admits no nontrivial central extensions, so that $G$ does not admit even projectively linear actions on projective spaces of smaller dimension than $\dim(\bP(V))$. 
This is the case for
\begin{itemize}
\item $q=5$ and $X$ the smooth quadric surface given by
\begin{equation}
\label{eqn:quad}
\sum_{i=1}^5 x_i^2 =\sum_{i=1}^5 x_i=0. 
\end{equation}
\item $q=7$ and $X\subset \bP^5$: 
the space of invariants is 1-dimensional, these are the Pfaffian cubic fourfolds 
\begin{equation}
\label{eqn:cubepf}
 x_1^2x_2
 +x_2^2x_3
 +x_3^2x_4
 +x_4^2x_5
 +x_5^2x_6
 +x_1x_6^2
 +\lambda^2(x_1x_3x_5
+x_2x_4x_6),
\end{equation}
smooth for $\lambda \not= 0,\xi, \sqrt{3}\xi$, with $\xi$ a $6$-th root of unity. 
\item  $q=8$ and $X\subset \bP^6$ 
is either the quadric
$$
x_1^2 + x_2^2 + x_3^2 + x_4^2 + x_5^2 + x_6^2 + x_7^2 =0,
$$
or the singular cubic fivefold
$$
x_1x_2x_6 + x_1x_3x_4 + x_1x_5x_7 + x_2x_3x_7 + x_2x_4x_5 + x_3x_5x_6 + x_4x_6x_7 =0.
$$
\item 
$q=9$ 
and $X\subset \bP^7$ is the (singular) quartic
\begin{equation}
\label{eqn:4}
\sum_{i=1}^9 x_i^4 =\sum_{i=1}^9 x_i=0. 
\end{equation}
\end{itemize}

\subsection*{Stably linearizable actions and $G$-Pfaffians}

By \cite{HT-stable}, the $G$-quadric surface \eqref{eqn:quad} is stably linearizable. The proof relied on a $G$-equivariant version of the universal torsor formalism. The Pfaffian construction yields 
stable linearizability in a fundamentally different way:

\begin{theo}
\label{thm:pfaff-stable}
Let $G$ be a finite group and $V$ a faithful representation of $G$ of even dimension $n=2m$. Assume that there exists a $G$-subrepresentation $L\subset \wedge^2V$ of dimension $n$. Let 
$$
X:=\mathrm{Pf}(V)\cap \bP(L), 
$$
and assume that the $G$-action on $X$ is generically free and that the generic rank of the $G$-vector bundle $\cK_X\to X$ is 2. Then 
$X\times \bP^1$, with trivial action on the second factor, is $G$-linearizable. 
\end{theo}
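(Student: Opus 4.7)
The strategy is to construct a chain of $G$-birational equivalences
\[
X \times \bP^1 \;\sim_G\; \bP(\cK_X) \;\sim_G\; \bP(V^*),
\]
whose rightmost term carries the linear $G$-action associated with the representation $V^*$. Composing these identifies $X \times \bP^1$ as a linearizable $G$-variety.

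For the first equivalence, apply the equivariant no-name lemma to the rank-$2$ $G$-equivariant vector bundle $\cK_X \to X$ on the generically free $G$-variety $X$. This produces a $G$-birational equivalence between the total space of $\cK_X$ and $X \times \bA^2$ with trivial $G$-action on $\bA^2$. Since this trivialization is linear on fibers and therefore $\bG_m$-equivariant, it descends to $\bP(\cK_X) \sim_G X \times \bP^1$, with trivial action on the $\bP^1$ factor.

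For the second equivalence, the inclusion $\cK_X \subset V^* \otimes \cO_X$ realizes $\bP(\cK_X)$ as a $G$-invariant closed subvariety of $X \times \bP(V^*)$; let $q\colon \bP(\cK_X) \to \bP(V^*)$ denote the second projection. The key skew-symmetry observation is that for every $\omega \in \wedge^2 V$ and $\xi \in V^*$, the identity $\omega(\xi,\xi)=0$ forces the contraction $\iota_\xi \omega$ to lie in the hyperplane $\xi^\perp \subset V$. Consequently the evaluation map
\[
\phi_\xi \colon L \to V, \qquad \omega \mapsto \iota_\xi \omega,
\]
has rank at most $2m-1$ and nontrivial kernel; any nonzero $\omega \in \ker \phi_\xi$ has $\xi$ in its own kernel, so is degenerate and lies in $X$. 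Hence $q$ is surjective. The generic rank-$2$ hypothesis on $\cK_X$ gives $\dim \bP(\cK_X) = 2m-1 = \dim \bP(V^*)$, making $q$ generically finite; its generic fiber is the projectivization of a linear subspace of $L$, and the equal-dimension count forces this subspace to be $1$-dimensional. Hence the generic fiber of $q$ is a single point, so $q$ is $G$-birational.

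The main subtlety is upgrading surjectivity of $q$ to birationality, for which both the skew-symmetry identity and the generic rank hypothesis on $\cK_X$ are needed: the former produces a point in every fiber, while the latter matches dimensions and forces the generic fiber to collapse to a single point. Every step is manifestly $G$-equivariant, so the classical Pfaffian rationality argument upgrades directly to the $G$-equivariant setting.
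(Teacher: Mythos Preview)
Your proof is correct and follows essentially the same route as the paper's: both establish the chain $X\times\bP^1 \sim_G \bP_X(\cK_X) \sim_G \bP(V^*)$ via the No-Name Lemma for the first step and an analysis of the fibers of the projection $\bP_X(\cK_X)\to\bP(V^*)$ for the second, identifying the fiber over $[v^*]$ with $\bP(L)\cap\bP(\Lambda^2(v^*)^\perp)$. Your write-up is in fact a bit more explicit than the paper's---you spell out the skew-symmetry reason $\iota_\xi\omega\in\xi^\perp$ that forces every fiber to be nonempty, and you justify the descent of the No-Name trivialization to projectivizations---whereas the paper simply invokes a ``dimension count'' at the corresponding point.
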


\begin{proof}
Viewing each point $x\in X$ as a skew-symmetric map $V^* \to V$, we let 
$$
K_x\subset V^*
$$
be the kernel of $x$, where $x$ is viewed as a skew-matrix. For general $x$, we have $\dim(K_x)=2$; birationally, this gives a $G$-linearized vector bundle $\mathcal{K}_X \to X$ of rank 2. The No-Name Lemma \cite{BK85}, \cite{Do85}, \cite[\S 3.2, Cor. 3.12]{CS05} implies that $\mathcal{K}_X$ is $G$-birational to $X \times \bA^2$ (where the $G$-action on $\bA^2$ is trivial), moreover for the associated projective bundle  
we have
$$
\mathbb{P}_X(\mathcal{K}_X) \sim_G X\times \bP^1, 
$$
with trivial action on the second factor. 
On the other hand, we have a $G$-equivariant birational map
$$
\mathbb{P}_X(\mathcal{K}_X)\dashrightarrow \mathbb{P}(V^*).
$$ 
Indeed, given a point $[v^*]$ in $\mathbb{P}(V^*)$, its preimage in $\bP_X (\mathcal{K}_X)$ is the set of all skew-symmetric maps in $\bP (L) \subset \bP(\Lambda^2 V)$ containing $v^*$ in their kernel, thus equal to 
$$
\bP (L) \cap \bP (\Lambda^2 (v^*)^{\perp}) \subset \bP ( \Lambda^2 V).
$$
For generic $v^*$, this is a point, by dimension count. 
This shows that $X \times \bP^1$, which is $G$-birational to $\bP_X (\mathcal{K}_X)$, is linearizable. 
\end{proof}

\subsection*{Pfaffian quadrics}

The $G$-Pfaffian formalism yields new results already for  quadric surfaces. 

Let $X=\bP^1\times \bP^1$ and assume that 
the $G$-action on $X$ is 
generically free and minimal, i.e., $\Pic(X)^G=\bZ$. 
Then there is an extension 
$$
1\to G_0 \to G\to C_2\to 1,
$$
where $G_0$ is the intersection of $G$ with the
identity component of $\Aut(\bP^1)^2=\mathrm{PGL}_2^2$, 
and $C_2$ switches the factors in $\bP^1\times \bP^1$. 

The linearizability problem of such actions is settled, see, e.g., \cite{sarikyan}. 
The stable linearizability problem has been settled in  
\cite[Proposition 16]{HT-stable}: the only relevant case is when 
$$
G_0= \mathfrak D_{2n} \times_D \mathfrak D_{2n},
$$
with $D$ the intersection of $G_0$ with the diagonal subgroup, and $n$ {\em odd}. Here the 
dihedral group $\mathfrak D_{2n}$ of order $2n$ acts generically freely on $\bP^1$. 
Then $X\times \bP^2$, with trivial action on the second factor, is linearizable.  
In this situation, we obtain the following improvement:

\begin{prop}
\label{prop:HTQuadrics}
 The quadric surface $X$ is not linearizable but is 
 stably linearizable of level $1$, i.e., $X \times \bP^1$, with trivial action on the second factor, is linearizable. 
\end{prop}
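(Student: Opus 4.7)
The nonlinearizability of $X$ is established in \cite{sarikyan}, so the task is to refine the level-$2$ stable linearizability of \cite[Proposition 16]{HT-stable} down to level $1$. The plan is to realize the given $G$-quadric $X=\bP^1\times\bP^1$ as a $G$-equivariant Pfaffian section and then invoke Theorem~\ref{thm:pfaff-stable} directly.

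First, I would construct the ambient linear data. Fix a faithful $2$-dimensional representation $W$ of $\mathfrak{D}_{2n}$ (which exists since $n\ge 3$ is odd) whose projectivization realizes the given $\mathfrak{D}_{2n}$-action on each $\bP^1$-factor. Let $\pi_1,\pi_2\colon G_0\to \mathfrak{D}_{2n}$ be the two projections out of the fibre product, and set $V_i:=\pi_i^*W$, both $2$-dimensional $G_0$-representations. Define the $G$-representation
$$
V:=\mathrm{Ind}_{G_0}^G V_1,
$$
which is $4$-dimensional, and restricts as a $G_0$-module to $V_1\oplus V_2$. Any lift $\sigma\in G$ of the nontrivial element of $C_2$ acts by swapping the two summands. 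Faithfulness of $V$ follows from the faithfulness of $W$ on $\mathfrak{D}_{2n}$ combined with the Goursat-type fact that $(\pi_1,\pi_2)\colon G_0\hookrightarrow \mathfrak{D}_{2n}\times\mathfrak{D}_{2n}$ is injective; no element of $G\setminus G_0$ can act trivially because it must permute the two summands nontrivially.

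Next, I would exhibit the subrepresentation $L$. Decomposing
$$
\wedge^2 V \;=\; \wedge^2 V_1\;\oplus\;(V_1\otimes V_2)\;\oplus\;\wedge^2 V_2,
$$
the middle factor $L:=V_1\otimes V_2$ is a $4$-dimensional $G_0$-subrepresentation, and it is preserved by $\sigma$ since swapping the tensor factors gives $V_2\otimes V_1$, canonically $G_0$-isomorphic to $V_1\otimes V_2$. Thus $L\subset \wedge^2 V$ is a $4$-dimensional $G$-subrepresentation. Because $\dim V=4$, one has $\mathrm{Pf}(V)=\mathrm{Gr}(2,V)$, so $\mathrm{Pf}(V)\cap \bP(L)$ consists precisely of the rank-one tensors in $V_1\otimes V_2$, i.e.\ the Segre image of $\bP(V_1)\times \bP(V_2)$. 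The Segre embedding is manifestly $G$-equivariant, and the resulting $G$-action on $\bP(V_1)\times \bP(V_2)$ agrees with the given action on $X$: the two factor actions come from $\pi_1$ and $\pi_2$, and $\sigma$ swaps the factors, which is the hypothesis on the extension $1\to G_0\to G\to C_2\to 1$.

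Finally, I would verify the remaining hypotheses of Theorem~\ref{thm:pfaff-stable} and conclude. Generic freeness of $G$ on $X$ is part of the setting. The kernel bundle $\mathcal{K}_X$ has generic rank $2$ automatically: an element of $\mathrm{Pf}(V)$ for $\dim V=4$ is a skew form of rank at most $2$, so its kernel has dimension at least $2$ and equals $2$ on the smooth locus of the Pfaffian. Theorem~\ref{thm:pfaff-stable} then yields
$$
X\times \bP^1 \;\sim_G\; \bP(V),
$$
which is the asserted linearizability at level $1$. The only subtle point in this plan is Step~1, namely matching the two projective actions $\mathfrak{D}_{2n}\to \mathrm{PGL}_2$ coming from the fibre product structure of $G_0$ with the projectivization of a single linear lift $W$; this is where the hypothesis that $n$ be odd (ensuring existence and essential uniqueness of a faithful $2$-dimensional lift of $\mathfrak{D}_{2n}$) is used, and it is the only place in the argument that could fail for a different parameter choice.
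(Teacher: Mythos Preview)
Your proposal is correct and follows essentially the same route as the paper: both arguments realize the $G$-quadric as the Segre surface $\bP(V_1)\times\bP(V_2)\subset\bP(V_1\otimes V_2)\subset\bP(\wedge^2(V_1\oplus V_2))$ and then invoke Theorem~\ref{thm:pfaff-stable} with $n=4$. The only cosmetic difference is that the paper takes two (possibly distinct) faithful $2$-dimensional $\mathfrak D_{2n}$-representations $W',W''$ and sets $V=W'\oplus W''$, $L=W'\otimes W''$ directly, whereas you produce $V$ via $\mathrm{Ind}_{G_0}^G V_1$; your framing has the advantage of making the $G$-structure (including the swap by $C_2$) explicit from the outset, while the paper's choice of allowing $W'\neq W''$ absorbs the ``matching'' subtlety you flag at the end.
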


This answers a question raised in \cite[Remark 9.14]{Lemire}, 
strengthening a theorem from 
\cite[Section 9]{Lemire}, in the case $G=C_2\times \fS_3$, 
and \cite[Proposition 16]{HT-stable} in general. 

\begin{proof}
The nonlinearizability statement follows from  \cite{sarikyan}, see the discussion preceding \cite[Proposition 16]{HT-stable}.

Let $W', W''$ be irreducible faithful two-dimensional representations of $\mathfrak D_{2n}$. Put $L= W' \otimes W''$ and $V = W' \oplus W''$. There is a natural $\mathfrak D_{2n}$-invariant skew symmetric matrix $M$ corresponding to
\[
    W' \otimes W'' \subset \Lambda^2(W' \oplus W'')
\]
This matrix is also anti-invariant under the $C_2$ exchanging $W'$ and $W''$. More precisely, if $w_1', w_2'$ is a basis of $W'$ and $w_1'',w_2''$ is a basis of $W''$, we can choose $x_{ij} := w_i'\otimes w_j''$ as a basis of $W'\otimes W''$. In this basis, 
\[
    M = \begin{pmatrix}
        0 & 0 & x_{11} & x_{12} \\
        0 & 0 & x_{21} & x_{22} \\
        -x_{11} & -x_{21} & 0 & 0 \\
        -x_{12} & -x_{22} & 0 & 0 \\    
    \end{pmatrix}.
\]
The induced Pfaffian representation of $X$ satisfies the assumptions of Theorem~\ref{thm:pfaff-stable} if and only if $n$ is odd. 
\end{proof}

\subsection*{Pfaffian cubic fourfolds}
We return to the setup of the equivariant Pfaffian construction in Theorem~\ref{thm:pfaff-stable}. 
Concretely, we proceed as follows: 
\begin{enumerate}
    \item Let $V$ be a $G$-representation of dimension 6. 
    \item Assume there is a decomposition of representations
    $$
    \Lambda^2 V=L \oplus L^{\perp},
    $$
    with $L$ a $6$-dimensional and $L^{\perp}$ a $9$-dimensional $G$-representation.  
    \item
    Assume that the cubic fourfold $X\subset \bP (L)$ is smooth; then the associated 
    K3 surface $S \subset \bP (L^{\perp})$ is also smooth, 
    see e.g. \cite[Lemma 4.4]{kuz-der-view}. 
\end{enumerate}
As immediate consequence of Theorem~\ref{thm:pfaff-stable}, we have: 

\begin{coro}
\label{coro:stab-lin}
Let $G$ be a finite group 
admitting a 6-dimensional faithful representation $V$ over $k$, yielding a Pfaffian cubic fourfold
$X\subset \bP(L)$ as described above. Assume that 
the $G$-action on $X$ is generically free. 
Then $X\times \bP^1$ is $G$-linearizable. 
\end{coro}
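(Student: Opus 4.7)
The plan is to derive this corollary as a direct application of Theorem~\ref{thm:pfaff-stable}, so the work consists almost entirely of verifying that every hypothesis of that theorem is in place for the Pfaffian cubic fourfold setup described above.

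First, I would line up the numerical data: the faithful $G$-representation $V$ has dimension $6 = 2m$ with $m=3$, and the chosen subrepresentation $L \subset \Lambda^2 V$ has dimension $6 = n$, matching the requirement on the dimension of $L$ in Theorem~\ref{thm:pfaff-stable}. By construction, $X = \mathrm{Pf}(V) \cap \bP(L)$, and the smoothness together with generic freeness of the $G$-action on $X$ are among the standing assumptions of the corollary.

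The one genuinely nontrivial point to check is that the generic rank of the kernel bundle $\mathcal K_X \to X$ equals $2$. This is where I would spend the most attention, though it follows from a standard linear-algebra fact about the Pfaffian stratification: a closed point $x \in \mathrm{Pf}(V)$ corresponds, up to scaling, to a skew-symmetric bilinear form $V^* \to V$ whose generic rank on $\mathrm{Pf}(V)$ is exactly $4$ (the locus of rank $\le 2$ being the smaller stratum $\mathrm{Gr}(2,V)$). Since $X$ is a smooth cubic fourfold, it is not contained in the Grassmannian stratum, so the rank-$4$ locus meets $X$ in a dense open; on this open the kernel $K_x \subset V^*$ is $2$-dimensional, giving $\mathcal K_X$ generic rank $2$ as a $G$-linearized sheaf. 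The $G$-equivariance of $\mathcal K_X$ comes for free because the assignment $x \mapsto \ker(x)$ is visibly $G$-equivariant once $V$ is endowed with its $G$-structure.

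With all hypotheses verified, I would invoke Theorem~\ref{thm:pfaff-stable} verbatim: the No-Name Lemma applied to $\mathcal K_X$ identifies $\bP_X(\mathcal K_X)$ with $X \times \bP^1$ (trivial action on the second factor) in the $G$-birational category, while the fiberwise incidence construction gives a $G$-birational map $\bP_X(\mathcal K_X) \dashrightarrow \bP(V^*)$, so composing yields $X \times \bP^1 \sim_G \bP(V^*)$, which is linear. The main conceptual hurdle is not in this corollary itself but in Theorem~\ref{thm:pfaff-stable}; here the only subtlety is making sure the $G$-action does not force $X$ into the degenerate locus $\mathrm{Gr}(2,V) \cap \bP(L)$, which is ruled out by the smoothness of $X$ as a cubic fourfold.
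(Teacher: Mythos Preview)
Your proposal is correct and follows exactly the paper's approach: the corollary is stated there as an ``immediate consequence of Theorem~\ref{thm:pfaff-stable}'' with no further argument given. You are in fact more explicit than the paper in checking the generic-rank-$2$ condition on $\mathcal K_X$, and your justification (smoothness of $X$ forces $X\not\subset\mathrm{Gr}(2,V)$, hence the generic skew form on $X$ has rank $4$) is the right one.
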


In this setting, the obvious rationality construction need no longer work in the $G$-equivariant context, and could thus yield nonlinearizable $G$-actions on cubic fourfolds. With this in mind, we excluded in (1) the existence of a $G$-invariant hyperplane in $\bP(V)$. The following example yields a nonlinearizable action. 

\begin{exam}
\label{exam:main}
Consider the \emph{Frobenius group} 
\[
G :=\mathrm{AGL}_1 (\bF_7) = \bF_7 \rtimes \bF_7^\times = C_7 \rtimes C_6.
\]
We describe its representations:
\begin{enumerate}
    \item 
    There are nonisomorphic 1-dimensional representations 
    $$
     k_{\chi_i}, \quad i=1, \ldots , 6,
    $$
    corresponding to characters of the quotient $C_6$. 
    \item
    There is a single faithful irreducible $6$-dimensional representation $V$, induced from a nontrivial character of $C_7$. 
\end{enumerate}
In particular, $G$ has no faithful representations of dimension $\le 5$. One checks that 
$\wedge^2V$ contains $V$ as a subrepresentation, with multiplicity 2. 
Thus we have a $\bP^1$-worth of choices for a $G$-subrepresentation $L\simeq V$ inside $\Lambda^2 V$, and the general one gives a smooth cubic fourfold.  Concretely, 
the matrix
\[
M_\lambda = \begin{pmatrix}
      0&{-\lambda{x}_{4}}&{-{x}_{2}}&0&{x}_{6}&{-\lambda{x}_{3}}\\
      \lambda{x}_{4}&0&\lambda{x}_{5}&{x}_{3}&0&{-{x}_{1}}\\
      {x}_{2}&{-\lambda{x}_{5}}&0&{-\lambda{x}_{6}}&{-{x}_{4}}&0\\
      0&{-\lambda{x}_{3}}&{x}_{6}&0&\lambda{x}_{1}&{x}_{5}\\
      {-{x}_{6}}&0&{x}_{4}&{-\lambda{x}_{1}}&0&{-\lambda{x}_{2}}\\
      \lambda{x}_{3}&{x}_{1}&0&{-{x}_{5}}&\lambda{x}_{2}&0
\end{pmatrix}
\]
is invariant under 
$$
g: (x_1,x_2,x_3,x_4,x_5,x_6) \mapsto 
(\zeta_7x_1,\zeta_7^5x_2,\zeta_7^4x_3,\zeta_7^6x_4,\zeta_7^2x_5,\zeta_7^3x_6)
$$
(with $\zeta_7$ a primitive $7$th root of unity) and 
$$
h\colon x_i \mapsto -x_{i+1},
$$
which generate $G$. Its Pfaffian is given by
\[
    \lambda \bigl(
 x_1^2x_2
 +x_2^2x_3
 +x_3^2x_4
 +x_4^2x_5
 +x_5^2x_6
 +x_1x_6^2
 +\lambda^2(x_1x_3x_5
+x_2x_4x_6)
\bigr),
\]
which is smooth for $\lambda \not= 0,\xi, \sqrt{3}\xi$ with $\xi$ a $6$-th root of unity; these fourfolds appeared in \cite[Theorem 1.2, Case 7(b)]{LZ}.
The associated K3 surface $S\subset \bP (L^{\perp})$ is also smooth and carries a natural, generically free, $G$-action by construction.  
\end{exam}

\begin{rema}\label{rConnectionToSegre}
One has $\mathrm{AGL}_1 (\bF_7) \subset \mathfrak{S}_7$, and it is possible to write the $\mathfrak{S}_7$-invariant smooth cubic fourfold 
\[
\sum_{i=0}^6 z_i^3 =0 , \quad \sum_{i=0}^6 z_i=0
\]
in the above Pfaffian form: indeed, consider the substitution
\[
	f \colon z_i \mapsto \sum_{j=0}^6 \zeta^{ij} x_j
\]
with $\zeta=\zeta_7$ a primitive 7th root of unity. It satisfies
\[
	 f(z_0+\dots+z_6) = 7 x_0 
\]
and $f(z_0^3+\dots+z_6^3)|_{z_0=0}$ turns out to be equal to 
\begin{gather*}
21(x_2^2x_3+x_1x_3^2+2x_1x_2x_4+x_1^2x_5+x_4x_5^2+x_4^2x_6+2x_3x_5x_6+x_2x_6^2).
\end{gather*}
Cyclically permuting
\[
	x_1 \mapsto x_4 \mapsto x_6 \mapsto x_1
\]
gives a multiple of our equation with $\lambda^2 = 2$.
\end{rema}

\begin{theo}\label{thm:GK3NotLin}
Let $G=\mathrm{AGL}_1 (\bF_7)$ and $X \subset \bP^5$ be
a smooth cubic fourfold constructed in Example~\ref{exam:main}. Then: 
\begin{enumerate}
\item The $G$-action on $X$ is not (projectively) linearizable.
 \item The $G$-action on $X\times \bP^1$, with trivial action on the second factor, is linearizable. 
\item There is a $G$-equivariant primitive embeddings of polarized integral Hodge structures
$$
\rH^2(S,\bZ)_{\rm pr} \hookrightarrow \rH^4(X,\bZ)_{\rm pr}(1).
$$
\item The Kuznetsov component 
$\mathcal{A}_X$ from  the natural semiorthogonal decomposition
\[
\rD^b (X) = \langle \mathcal{A}_X , \mathcal{O}_X, \mathcal{O}_X(1), \mathcal{O}_X (2) \rangle,
\]
is equivalent, as a $G$-category, to $\rD^b(S)$ for the $G$-{\rm K3} surface $S$ obtained in the Pfaffian construction. 
\item
The Fano variety of lines $F_1 (X)$ is $G$-birational to $S^{[2]}$. 
\end{enumerate}
\end{theo}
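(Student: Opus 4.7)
The proof assembles results already in place. Part (4) follows from Proposition~\ref{prop:kuz-sum} applied to the 6-dimensional representation $V$ of Example~\ref{exam:main}; since $V$ is self-dual as a $G$-representation and $\Lambda^2 V$ contains $V$ with multiplicity 2, the required 6-dimensional subrepresentation $L\subset \Lambda^2 V^*$ exists. Part (5) follows by tracing the classical Pfaffian birational map $\varphi\colon S^{[2]}\dashrightarrow F_1(X)$ of \eqref{eqn:phi}: it sends a general $\{\mathfrak{p},\mathfrak{q}\}\in S^{(2)}$ to the line in $X$ of 2-forms in $\bP(L)$ vanishing on the 4-plane $\mathfrak{p}+\mathfrak{q}\subset V^*$, and since $V$, $L$, $L^\perp$, $S$, and $X$ are all $G$-equivariant by construction, $\varphi$ is manifestly a $G$-birational map. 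Part (2) is immediate from Corollary~\ref{coro:stab-lin}: the data of Example~\ref{exam:main} satisfy the hypotheses of Theorem~\ref{thm:pfaff-stable}, with $V$ faithful of even dimension 6, $L\subset \Lambda^2 V$ a 6-dimensional $G$-subrepresentation, the $G$-action on $X$ generically free (by the faithfulness and irreducibility of $V$), and the generic rank of the kernel bundle $\cK_X$ equal to 2.

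Part (3) combines Part (5) with Theorem~\ref{thm:hodge}. The $G$-birationality $S^{[2]}\sim_G F_1(X)$ between smooth projective holomorphic symplectic varieties yields an isomorphism of $G$-Hodge structures
\[
\rH^2(F_1(X),\bZ)\simeq \rH^2(S^{[2]},\bZ)\simeq \rH^2(S,\bZ)\oplus \bZ\delta.
\]
Combined with the $G$-equivariant Abel-Jacobi isomorphism \eqref{eqn:alpha} and restriction to primitive cohomology, this produces the desired $G$-equivariant primitive embedding of polarized integral Hodge structures.

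The substantive claim is Part (1), which I plan to handle by a purely representation-theoretic argument. First, $G=\mathrm{AGL}_1(\bF_7)$ has no faithful linear representation of dimension less than 6: the irreducibles of $G$ are six 1-dimensional characters factoring through $C_6$ and the unique faithful 6-dimensional representation $V$, and any faithful representation of $G$ must contain $V$, since the $C_6$-orbit of any nontrivial character of the normal $C_7$ has size 6. Second, the Schur multiplier $H^2(G,k^\times)$ vanishes (via the Lyndon-Hochschild-Serre spectral sequence for $1\to C_7\to G\to C_6\to 1$, using $\gcd(|C_6|,|C_7|)=1$ and divisibility of $k^\times$). Hence the Schur cover of $G$ is $G$ itself, so every faithful projective representation of $G$ has dimension at least 6, and $G$ admits no faithful action on $\bP^n$ for $n\le 4$. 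If $X$ were $G$-linearizable, then $X\sim_G\bP(W)$ with $\dim W=5$ would produce exactly such a forbidden faithful projective action on $\bP^4$, a contradiction.

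The only real obstacle is Part (1); the construction of Example~\ref{exam:main} is engineered so that $\dim X=4$ sits exactly one below the minimal dimension of a faithful projective action of $G$. Without this numerical coincidence one would be forced to invoke the finer equivariant Burnside machinery of \cite{KPT}, \cite{BnG} to detect nonlinearizability, as in Example~\ref{exam:NonBirP2}.
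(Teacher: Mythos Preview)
Your proof is correct and follows essentially the same route as the paper. The only cosmetic difference is in Part~(1): the paper deduces triviality of the Schur multiplier from the observation that all Sylow subgroups of $G=C_7\rtimes C_6$ are cyclic (a classical criterion), whereas you run the Lyndon--Hochschild--Serre spectral sequence for the extension $1\to C_7\to G\to C_6\to 1$; both arguments are valid and equally short. For Part~(3) the paper simply points to the Appendix (Theorem~\ref{thm:hodge}), but the chain you spell out---Part~(5) combined with Theorem~\ref{thm:hodge}, the decomposition $\rH^2(S^{[2]},\bZ)\simeq \rH^2(S,\bZ)\oplus\bZ\delta$, and the $G$-equivariant Abel--Jacobi isomorphism~\eqref{eqn:alpha}---is precisely the argument the paper sets up in Section~\ref{sect:hodge}.
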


\begin{proof}
Item (1) follows since $G$ has no faithful $5$-dimensional linear representations, thus the action is not linearizable; and since the Schur multiplier of $G$ is trivial (all Sylow subgroups of $G$ are cyclic), every projective representation of $G$ lifts to a linear representation of $G$. 

\medskip

Item (2) is Corollary \ref{coro:stab-lin}. 

\medskip

Item (3) is proved in the Appendix, by Brendan Hassett. 

\medskip

Item (4) follows from the $G$-equivariance of the functor 
$$
\mathcal{A}_X \to \rD^b (S),
$$ 
given by Kuznetsov's HPD construction; 
we summarized the main ingredients in Section \ref{sect:hpd} 
(with Kuznetsov's notation), see Proposition~\ref{prop:kuz-sum}. 

Alternatively, Ouchi's work \cite{Ouchi}, recalled in Section \ref{sect:aut-kuz}, yields the statement in a similar, although slightly weaker form: first, the $G$-action in our example fixes the sublattice $K\subset \mathrm{H}^{2,2}(X, \bZ)$ spanned by $h^2$ and the \emph{class} of a quintic del Pezzo surface $\Sigma$ in $X$ (but not an actual such cycle $\Sigma$ representing that class!). Points in $X$ can be viewed as skew-symmetric maps $V^* \to V$, and it makes sense to consider the locus of points in $X$ giving skew maps with kernel contained in some fixed chosen five-dimensional subspace $R_5 \subset V^*$. In general, this is a smooth quintic del Pezzo surface $\Sigma =\Sigma_{R_5}$. All such $\Sigma$'s yield the same class in cohomology, in fact, they are all algebraically equivalent (they form one connected algebraic family of cycles in $X$ parametrized by points in the Grassmannian $\mathrm{Gr}(5, V^*)$). In particular, $g(\Sigma )$ and $\Sigma$ give the same class. Ouchi's Theorem \ref{thm:oouchi} (and the subsequent discussion concerning the uniqueness of $S$) imply that in our example, $\cA_X$ is equivalent as a $G$-category to $\mathrm{D}^b (S)$ for  \emph{some} action of the group $G$ on $S$ (but we cannot conclude immediately that it is the one given by the Pfaffian construction). Note that in our case the subgroup of symplectic automorphisms $G_s$ of $G$ cannot be reduced to the trivial group or $C_2$ because these are not subgroups of $G$ with cyclic quotients. 

\medskip

Item (5) follows as  the construction in (\ref{eqn:phi}) is $G$-equivariant. 

\end{proof}

\bibliographystyle{alpha}
\bibliography{derbir}

\section{Appendix, by Brendan Hassett}

Fix a finite group $G$.
Let $Y$ and $Y'$ be projective hyperk\"ahler manifolds with regular $G$-actions.
We assume throughout the existence of a $G$-equivariant birational map
$$\phi: Y'\stackrel{\sim}{\dashrightarrow} Y.$$

Without group actions, Huybrechts \cite[Cor.~4.7]{HuyInventiones} shows that $\phi$ induces an isomorphism of Hodge structures
$$\psi: \mathrm{H}^*(Y,\bZ) \stackrel{\sim}{\rightarrow} 
\mathrm{H}^*(Y',\bZ).$$
Indeed, this follows from a geometric construction \cite[Th.~4.6]{HuyInventiones}:
\begin{itemize}
\item{a connected complex pointed curve $(S,0)$;}
\item{families 
$$
\cY,\cY' \rightarrow S
$$
of smooth hyperk\"ahler manifolds with distinguished fibers
$$Y\simeq \cY_0, \quad Y' \simeq \cY'_0;$$}
\item{an isomorphism 
$$ \Phi:\cY'|_{S\setminus \{0\}} \simeq \cY|_{S\setminus \{0\}}$$
over $S\setminus \{0\}$.}
\end{itemize}
The induced isomorphisms on cohomology yield the desired $\psi$, on specialization to $0$.

Here we explain how to carry out the argument while respecting
the group action.

We record some elementary facts:
\begin{lemm}
Let $\phi$ be a birational map of hyperk\"ahler varieties with $G$-action as above. Then we have
\begin{itemize}
\item{the indeterminacy of $\phi$ and $\phi^{-1}$ has codimension $\ge 2$;}
\item{$\phi$ induces isomorphisms
$$\phi^*:\rH^2(Y,\bR) \stackrel{\sim}{\rightarrow} \rH^2(Y',\bR)$$
whence
$$\phi^*: \Gamma(\Omega^2_Y) \stackrel{\sim}{\rightarrow} 
\Gamma(\Omega^2_{Y'}) \text{ and }
\phi^*: \Pic(Y) \stackrel{\sim}{\rightarrow} 
\Pic(Y'),$$
all compatible with the group action. In particular, the symplectic forms
on $Y$ and $Y'$ yield the same characters of $G$.}
\end{itemize}
\end{lemm}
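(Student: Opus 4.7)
The plan is to prove the two bullets in order, then deduce the character statement about the symplectic forms. Throughout, $G$-equivariance of $\phi$ propagates by naturality once the underlying non-equivariant statements are in hand.

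First, I would establish the codimension bound on the indeterminacy loci. Resolving $\phi$ by a smooth projective variety $\tilde Y$ with birational morphisms $f\colon \tilde Y\to Y'$ and $g\colon \tilde Y\to Y$, triviality of $K_Y$ and $K_{Y'}$ (via the symplectic forms) gives two effective expressions
\[
K_{\tilde Y}=\sum a_i E_i=\sum b_j F_j,
\]
supported on the $f$-exceptional and $g$-exceptional divisors respectively. Comparing these forces the two exceptional loci to coincide as divisors, so no prime divisor of $Y'$ is contracted by $\phi$ and no prime divisor of $Y$ is contracted by $\phi^{-1}$, proving the codimension-$\ge 2$ statement. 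These loci are $G$-invariant because $\phi$ and $\phi^{-1}$ are $G$-equivariant.

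Next, I would use this to construct the three pull-back isomorphisms. Let $U\subset Y'$ and $V\subset Y$ be the loci of regularity of $\phi$ and $\phi^{-1}$; their complements have complex codimension $\ge 2$, hence real codimension $\ge 4$, so the Gysin (cohomology-with-supports) sequence gives isomorphisms $\rH^2(Y',\bR)\stackrel{\sim}{\to}\rH^2(U,\bR)$ and $\rH^2(Y,\bR)\stackrel{\sim}{\to}\rH^2(V,\bR)$. The restriction of $\phi$ identifies suitable further Zariski opens of $U$ and $V$ whose complements remain of real codimension $\ge 4$, and composing yields $\phi^*\colon \rH^2(Y,\bR)\stackrel{\sim}{\to}\rH^2(Y',\bR)$. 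Passing to the $(2,0)$-Hodge component (and invoking Hartogs extension for holomorphic forms on a smooth variety) gives $\phi^*\colon \Gamma(\Omega^2_Y)\stackrel{\sim}{\to}\Gamma(\Omega^2_{Y'})$, while uniqueness of extensions of line bundles on smooth varieties across codimension-two loci gives $\phi^*\colon \Pic(Y)\stackrel{\sim}{\to}\Pic(Y')$. All three intertwine the $G$-actions because $g\circ\phi=\phi\circ g$ as rational maps for every $g\in G$.

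For the character statement, since $Y$ and $Y'$ are holomorphic symplectic the spaces $\Gamma(\Omega^2_Y)$ and $\Gamma(\Omega^2_{Y'})$ are one-dimensional, so $G$ acts on each by a character; the $G$-equivariant isomorphism $\phi^*$ sends $\sigma_Y$ to a nonzero multiple of $\sigma_{Y'}$, forcing the two characters to agree. The main obstacle is the extension of cohomology classes, differential forms, and line bundles across the codimension-two indeterminacy loci; once these are justified (by Gysin, Hartogs, and normality respectively), the $G$-equivariance is automatic and the character identification is essentially formal.
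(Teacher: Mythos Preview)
Your proposal is correct and follows essentially the same approach as the paper's own proof, just with considerably more detail. The paper's argument is a four-sentence sketch (``trivial canonical class precludes exceptional divisors; dimensional considerations; Hartogs-type extension''), and you have spelled out the standard arguments behind each of these phrases: the comparison of effective canonical divisors on a common resolution for the first bullet, the Gysin/codimension argument for $\rH^2$, and Hartogs plus $S_2$-extension for forms and line bundles.
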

\begin{proof}
The indeterminacy of our maps is $G$-invariant and has (complex)
codimension $>1$ because both $Y$ and $Y'$ have trivial canonical class.
This precludes any exceptional divisors.  

The isomorphism on cohomology follows from dimensional considerations.
The compatible isomorphisms for holomorphic $2$-forms and the
Picard group reflect Hartogs-type extension theorems.  
\end{proof}

Choose $L'$ to be an ample line bundle on $Y'$ that admits a 
linearization of the $G$-action.  Let $L$ be the corresponding
line bundle on $Y$ under the pull-back homomorphism,  which is
necessarily $G$-invariant as well. Note that the
Beauville-Bogomolov-Fujiki forms $q_Y$ and $q_{Y'}$ take
the same values (see \cite[p.~92]{HuyInventiones})
$$
q_Y(L)=q_{Y'}(L').
$$

The deformation spaces $\operatorname{Def}(Y,L)$ and 
$\Def(Y',L')$ (as polarized varieties) are germs of analytic spaces, with 
tangent spaces
$$L^{\perp} \subset \rH^1(Y,T_Y)\simeq \rH^1(Y,\Omega^1_Y),
\quad
(L')^{\perp} \subset \rH^1(Y',T_{Y'})\simeq 
\rH^1(Y',\Omega^1_{Y'}).$$
These come with natural $G$-actions and equivariant isomorphisms
$$\operatorname{Def}(Y,L) \stackrel{\sim}{\rightarrow} \operatorname{Def}(Y',L').$$

\begin{rema}
The group $G$ may fail to act faithfully on 
$\Def(Y,L)$. The kernel $G_{\circ}\subset G$ acts on fibers of the family \cite{HTMMJ}.
Observe that $G_{\circ} \times \Def(Y,L)$ has a natural
$G$-action
$$g\cdot (g_{\circ},y) = (gg_{\circ}g^{-1},gy)$$
commuting with the fiberwise $G_{\circ}$-action.
\end{rema}

\begin{lemm} \label{lemm:getcurve}
Let $0,\mathfrak{p} \in \Def(Y,L)$ denote the distinguished point and an arbitrary point. There exists a smooth pointed curve $(S,0)$ 
with $G$-action fixing $0$, along with an equivariant morphism
$$(S,0) \rightarrow (\Def(Y,L),0),$$
whose image contains $\mathfrak p$.  
\end{lemm}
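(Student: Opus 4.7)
The plan is to reduce, by Bochner linearization at the $G$-fixed point $0$, to a problem on the linear $G$-representation $V := T_0\Def(Y,L) = L^{\perp}$, and then to construct the required smooth curve as a $G$-equivariant normalization of the orbit of a single line.

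First, since $\Def(Y,L)$ is a smooth analytic germ at the $G$-fixed point $0$ and $G$ is finite, Bochner linearization produces a $G$-equivariant analytic isomorphism of germs $(\Def(Y,L),0)\cong (V,0)$, where $V$ carries its induced linear $G$-action. Under this identification $\mathfrak{p}$ corresponds to a vector in $V$, and it suffices to produce a smooth $G$-pointed curve $(S,0)$ together with a $G$-equivariant morphism $S\to V$ sending $0\mapsto 0$ and containing $\mathfrak{p}$ in its image.

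To construct such a curve, let $H:=\mathrm{Stab}_G(\bC\mathfrak{p})\subseteq G$ denote the stabilizer of the line $\bC\mathfrak{p}$ and let $\chi\colon H\to\bC^{\times}$ be the character through which $H$ acts on $\bC\mathfrak{p}$. The orbit $G\cdot\bC\mathfrak{p}\subset V$ is a $G$-invariant union of $|G/H|$ lines through $0$ containing $\mathfrak{p}$, and its $G$-equivariant normalization is the smooth curve
\[
\tilde{S}:= G\times_H \bA^1,
\]
where $H$ acts on $\bA^1$ via the character $\chi$ and $G$ permutes the $|G/H|$ components; it comes equipped with the tautological $G$-equivariant morphism $[g,t]\mapsto g\cdot(t\mathfrak{p})$, which hits $\mathfrak{p}$ at the class $[1,1]$. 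To secure a $G$-fixed basepoint, I would set
\[
S:= (\bA^1,0)\sqcup \tilde{S},
\]
where the additional component carries the trivial $G$-action and is mapped constantly to $0\in V$; then $0\in\bA^1$ is a $G$-fixed basepoint of $S$, and composing the combined morphism with the inverse of the Bochner linearization gives the desired equivariant $(S,0)\to (\Def(Y,L),0)$ with $\mathfrak{p}$ in its image.

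The main obstacle is the $G$-fixed basepoint condition: since $G$ acts transitively on the preimage of $0$ in $\tilde{S}$, no individual preimage is $G$-fixed unless $\mathfrak{p}$ happens to be a $G$-eigenvector, and the disjoint-union trick above is the cleanest way around this. If the sequel requires $S$ to be connected, one would instead need to use a smooth $G$-Galois cover of $\bP^1$ admitting a $G$-fixed point, combined with a Riemann-Roch type dimension count to ensure the space of $G$-equivariant morphisms to $V$ is large enough to hit an arbitrary $\mathfrak{p}$. I expect the disconnected construction above to suffice for the intended application, since the $G$-fixed component supplies the specialization to the distinguished fibre $Y$ while the orbit component provides the passage through $\mathfrak{p}$.
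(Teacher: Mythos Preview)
Your disjoint-union construction satisfies the lemma as literally stated, but your claim that it suffices for the intended application is wrong, and this is the heart of the matter. The lemma is there so that one can run Huybrechts' specialization argument: over a $G$-invariant dense open $U\subset S$ where the fibres have Picard rank one, the two families $\cY|_U$ and $\cY'|_U$ become isomorphic, and one then specializes the resulting isomorphism on cohomology to the fibre over the basepoint $0$. For this one needs $0\in\overline U$. In your $S=(\bA^1,0)\sqcup\tilde S$, the component carrying the basepoint maps constantly to $0\in\Def(Y,L)$, so the pulled-back family over it is the constant family $Y$, certainly not of generic Picard rank one; the locus where Huybrechts' argument applies lies entirely on $\tilde S$, which is disconnected from your basepoint. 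There is nothing on the trivial component to specialize \emph{from}, so no conclusion about $\rH^*(Y)$ follows. Your sentence ``the $G$-fixed component supplies the specialization to the distinguished fibre $Y$'' has it backwards.

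The paper argues differently. Rather than linearizing and taking the orbit of a line, it passes to the quotient $G\backslash\Def(Y,L)$, chooses an \emph{irreducible} curve $S_1$ there through the images of $0$ and $\mathfrak p$, and then pulls back and resolves. The fibre of the quotient map over $\bar 0$ is the single point $\{0\}$ (since $0$ is $G$-fixed), so every irreducible component of the pullback passes through $0$; thus $0$ lies in the closure of the generic locus on every branch, which is exactly what the specialization needs. Your $\tilde S$ already had this feature before you added the trivial component --- it is essentially the special case where $S_1$ is the image of the line $\bC\mathfrak p$ --- and the correct way to handle the $G$-fixed-basepoint issue is along these quotient-and-pullback lines rather than via a disjoint union.
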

\begin{proof}
Consider the universal family 
$$\cY \rightarrow \Def(Y,L)$$
and the diagram

$$\begin{array}{ccccc}
G \times \cY & \rightarrow & G\times^G \cY=\cY &\rightarrow & G \backslash \cY \\
\downarrow &  & \downarrow & & \downarrow \\
G\times \Def(Y,L) &  \rightarrow & G\times^G \Def(Y,L) = \Def(Y,L) &\rightarrow  & G \backslash\Def(Y,L)
\end{array}
$$
When $X$ has a left $G$-action, $G\times^G X$ is the quotient of $G\times X$ under the relation $(hg,x)=(h,gx)$ for $g,h\in G$ and $x\in X$.
The left horizontal arrows are quotients; the right horizontal arrows are induced by projections
onto second factors.

Start with an irreducible curve $S_1$ in the quotient space $G\backslash \operatorname{Def}(Y,L)$ containing the images of $0$ 
and $\mathfrak p$.  
The diagram above and resolution of singularities give a finite morphism $\gamma:S_2 \rightarrow S_1$ from a non-singular curve and a $G$-equivariant morphism
$$\cY_2 \rightarrow S_2$$
such that the classifying morphism
$S_2 \rightarrow G \backslash \operatorname{Def}(Y,L)$ 
coincides with $\gamma$.  
\end{proof}

We choose $\mathfrak p\in\operatorname{Def}(Y,L)$ such that 
$\Pic(\cY_{\mathfrak p})=\bZ L$. Using Lemma~\ref{lemm:getcurve},
choose compatible
$$(S,0) \rightarrow (\operatorname{Def}(Y,L),0),
\quad
(S,0) \rightarrow (\operatorname{Def}(Y',L'),0)$$
so that the corresponding families
$$\cY\rightarrow S, \quad \cY'\rightarrow S$$
have generic Picard rank one. We may repeat the argument of \cite{HuyInventiones};
the birationality construction appears in \cite[\S4]{HuyJDG}.  
Our families are $G$-equivariantly isomorphic over a $G$-invariant non-empty open 
$U\subset S$, hence the fibers have
isomorphic Hodge structures over {\em all} $s\in S$, including $0$.
Thus we obtain $G$-equivariant isomorphisms
$$\rH^*(Y',\bZ) = \rH^*(\cY'_0,\bZ) \simeq \rH^*(\cY_0,\bZ) = \rH^*(Y,\bZ),$$
compatible with Hodge structures.

\end{document}